\DeclareMathOperator{\Supp}{\operatorname {supp}}
\DeclareMathOperator{\Deg}{\operatorname {deg}}
\def\Delta{\varDelta}
\def\Gamma{\varGamma}
\def\Sigma{\varSigma}
\def\Phi{\varPhi}
\theoremstyle{thmstyleone}%
\newtheorem{theorem}{Theorem}
\newtheorem{corollary}[theorem]{Corollary}%
\theoremstyle{thmstyletwo}%
\newtheorem{example}{Example}%
\newtheorem{remark}{Remark}%
\theoremstyle{thmstylethree}%
\newtheorem{definition}{Definition}%
\begin{document}

\title[Some exact constants  for bilateral approximations]{Some exact constants for bilateral approximations in quasi-normed groups}
\author{\fnm{Oleh} \sur{Lopushansky}}
\footnotetext{\href{mailto:olopuszanski@ur.edu.pl}; Institute of Mathematics, University of Rzesz\'ow, Poland}
\abstract{
We establish inverse and direct theorems on best approximations in quasi-normed Abelian groups 
in the form of bilateral Bernstein-Jackson inequalities with exact constants.  
Using integral representations for quasi-norms of  functions $f$ in Lebesgue's spaces
by decreasing rearrangements $f^*$ with the help of approximation $E$-functionals, error estimates are found. 
Examples of numerical calculations,  spectral approximations
of self-adjoint operators obtained by obtained estimates are given.\\}

\keywords{Bernstein-Jackson inequalities, quasi-normed Abelian groups, 
    exact approximation  constants}
\pacs[MSC Classification]{42B35, 41A44, 41A17,46B70}

\maketitle

\section{Introduction and main results}\label{1}

This article is devoted to the proof of a bilateral version of Bernstein-Jackson inequalities for quasi-normed
Abelian groups. The case of similar inequalities on Gaussian Hilbert spaces of random variables was analyzed in the previous publication \cite{Lopushansky2023}.
In the studied case, we use the approximation scale
\[
 \mathcal{B}_\tau^s(\mathfrak{A}_0,\mathfrak{A}_1)=\left(\mathfrak{A}_0,\mathfrak{A}_1\right)_{\vartheta,q}^{1/\vartheta},\quad s+1=1/\vartheta,\quad \tau=\vartheta q
\]
of compatible couple of quasi-normed Abelian groups $(\mathfrak{A}_\imath,{\vert\cdot\vert_\imath})$
with ${0<\vartheta<1}$ and ${0<q\le\infty}$, defined by the Lions-Peetre method of real interpolation.
It should be noted that in partial cases this scale coincides with the known scale of Besov spaces
(see  \cite[Thm 7.2.4]{bergh76}, \cite{DeVore1988,DeVore1998},\cite[Thm 2.5.4]{Nikolski75}).

We also analyze the problem of accurate error estimates in quasi-normed spaces
$L^p_\mu=L^p_\mu(\mathfrak{A})$ $(0\le p\le\infty)$  of   $\mathfrak{A}$-valued functions ${f\colon X\to\mathfrak{A}}$ on a measure space $(X,\mu)$.
Using the decreasing rearrangement method (see e.g. \cite{Burchard,Grafakos2014}),
we reduce this problem to the simpler case for uniquely defined decreasing functions $f^ * $ on $ (0,\infty)$.

The scale of quasi-normed spaces $\mathcal{B}_\tau^s(\mathfrak{A}_0,\mathfrak{A}_1)$ endowed with the quasinorm ${\|\cdot\|_{\mathcal{B}_\tau^s}}$
is determined by the best approximation $E$-functional
\[
E(t,a;\mathfrak{A}_0,\mathfrak{A}_1)= \inf\left\{\vert a-a_0\vert_1\colon\vert a_0\vert _0<t\right\},\quad a\in{\mathfrak{A}_1}
\]
which fully characterizes the accuracy of error estimates (see e.g. \cite{bergh76,Maligranda1991,PeetreSparr1972}).
In Theorem~\ref{t2.1} we establish the bilateral version of Bernstein-Jackson inequalities for
quasi-normed groups with the exact constants $c_{s,\tau}$
depend only on basis parameters $\mathcal{B}_\tau^s(\mathfrak{A}_0,\mathfrak{A}_1)$
\begin{align*}
t^{s}E(t,a)\le{c_{s,\tau}}\vert a\vert_{\mathcal{B}_{\tau}^{s}}\le2^{1/2}\vert a\vert^s_0\vert a\vert_1,\quad
c_{s,\tau}=\bigg[\frac{s}{\tau(s+1)^2}\bigg]^{1/\tau}
\end{align*}
for all $a\in\mathfrak{A}_0\bigcap\mathfrak{A}_1$ and $t,\tau>0$.
The left inequality allows the unique extension to the whole approximation spaces $\mathcal{B}_\tau^s(\mathfrak{A}_0,\mathfrak{A}_1)$
 in form of the Jackson-type inequality
\[
E(t,a)\le{c_{s,\tau}t^{-s}}\,\vert a\vert_{\mathcal{B}_{\tau}^{s}}\quad\text{for all}\quad{a\in \mathcal{B}_{\tau}^{s}(\mathfrak{A}_0,\mathfrak{A}_1)}.
\]

In Section~\ref{sec3} these results are applied to the quasi-normed Abelian groups
$L^p_\mu=L^p_\mu(\mathfrak{A})$ with $0\le p\le\infty$ and a positive Radon measure
$\mu$ on a measure space $X$ of  measurable functions ${f\colon X\to\mathfrak{A}}$ endowed with the $\kappa$-norm
\begin{equation*}
\|f\|_p= \left\{\begin{array}{ll}\displaystyle
\Big(\int\vert f(x)\vert ^p\,\mu(dx)\Big)^{1/p} & \hbox{if }  0< p<\infty\\
\mathop{\rm ess\, sup}_{x\in X}\vert f(x)\vert & \hbox{if }p=\infty\\[1ex]
\mu\left(\Supp f\right) & \hbox{if }p=0,
\end{array}\right.\end{equation*}
where  $\Supp f\subset X$ is a measurable subset such that $f\mid_{X\setminus\Supp f}=0$
and ${f\ne 0}$ almost everywhere on $\Supp f$ with respect to the measure $\mu$.
In Theorem~\ref{1} the equality
\begin{equation*}\label{E0infty}
f^*(t)=E\big(t,f;L_\mu^0,L_\mu^\infty\big),\quad f\in{L_\mu^\infty},
\end{equation*}
where $f^*$ is the  decreasing rearrangement  of $\mathfrak{A}$-valued functions $f\in L^p_\mu$
and, as a consequence, the following equalities
 \[
 \|f\|_{ \tau/s}=\left\{
\begin{array}{ll}\displaystyle
\left(\int_0^\infty \left[t^sf^*(t)\right]^\tau\frac{dt}{t}\right)^{1/\tau}&\hbox{if }\tau<\infty\\[2ex]
\sup\limits_{0<t<\infty}t^sf^*(t)&\hbox{if }\tau=\infty,
\end{array}\right.
 \]
are established.  We also compute exact constants in the Jackson-type inequalities
  \begin{equation*}\label{jackson}
f^*(t)\le  t^{s}2^{(s+1)/2}c_{s,\tau}\|f\|_{ \tau/s}\quad\text{for all}\quad{f\in L^{ \tau/s}_\mu}
\end{equation*}
that estimate measurable errors for decreasing rearrangements $f^*$,
  as well as the bilateral Bernstein-Jackson inequalities
\begin{align*}\label{bernstein}
t^{-s}2^{-(s+1)/2}f^*(t)\le c_{s,\tau}\|f\|_{ \tau/s}\le\|f\|^{s}_0\|f\|_\infty\quad\text{for all}\quad{f\in L^0_\mu\cap L^\infty_\mu}.
\end{align*}
In particular, the following inequality holds,
\[
f^*(t)\le \frac{2}{\pi t}\,\int\vert f\vert\,\mu(dx)\quad\text{for all}\quad f\in L^1_\mu.
\]

The numerical algorithm resulting from the Theorem~\ref{1}
was carried out on the example of inverse Gaussian distribution.
Examples of applications for accurate estimates of spectral approximations of self-adjoint operators are given
in Section~\ref{sec4}.

Note that basic  notations used in this work can be found in \cite{bergh76,Triebel78}.

\section{Best approximation scales of quasi-normed Abelian groups}\label{2}

In what follows, we study the
$\kappa$-normed Abelian groups $(\mathfrak{A},{\vert\cdot\vert})$ relative to an operation  $"+"$,
where $\kappa$-norm ${\vert\cdot\vert}$ with the constant
$\kappa\ge1$ is determined by the assumptions (see e.g. \cite{bergh76,Maligranda1991}):
\begin{quote}
$\vert0\vert=0$ and $\vert a\vert>0$ for all nonzero $a\in \mathfrak{A}$, \\
$\vert a\vert=\vert-a\vert$ for all  $a\in \mathfrak{A}$,\\
$\vert a+b\vert\le\kappa(\vert a\vert+\vert b\vert)$ with  $\kappa$ independent on $a,b\in \mathfrak{A}$
\end{quote}
As is known, each $\kappa$-norm $\vert\cdot\vert$ can be replaced by an
equivalent $1$-norm $\vert\cdot\vert'$ such that
$\vert\cdot\vert'\le\vert\cdot\vert^\rho\le2\vert\cdot\vert'$ by taking
$(2\kappa)^\rho=2$ with a suitable $\rho>0$  (see  e.g.  \cite{PeetreSparr1972}).
We will not assume the completeness of groups.

Given a compatible couple $(\mathfrak{A}_0, \mathfrak{A}_1)$ of $\kappa_\imath$-normed groups
$(\mathfrak{A}_\imath,{\vert\cdot\vert_\imath})$ with $\imath=\{0,1\}$ and $a=a_0+a_1$ in the
sum ${\mathfrak{A}_0+\mathfrak{A}_1}$ such that $a_\imath\in \mathfrak{A}_\imath$ we define the best approximation $E$-functional
$E(t,a;\mathfrak{A}_0,\mathfrak{A}_1)$ with ${a\in \mathfrak{A}_0+\mathfrak{A}_1}$ and ${t>0}$ in the form (see \cite[no.7]{bergh76})
\begin{equation}\label{E2}
E(t,a)=E(t,a;\mathfrak{A}_0,\mathfrak{A}_1)= \inf\left\{\vert a-a_0\vert_1\colon\vert a_0\vert _0<t\right\},\quad a\in{\mathfrak{A}_1}.
\end{equation}

\begin{definition}\label{Bes0}
  For any $\tau=q\vartheta$ and $s+1=1/\vartheta$ with ${0<\vartheta<1}$ and  ${0<q\le\infty}$
  the $\kappa$-normed best approximation scales of Abelian groups are defined to be
 \begin{equation}\label{bes}
 \begin{split}
\mathcal{B}_{\tau}^s(\mathfrak{A}_0,\mathfrak{A}_1)&=\left\{a\in \mathfrak{A}_0+\mathfrak{A}_1\colon\vert a\vert_{\mathcal{B}_\tau^s}<\infty\right\},\\
\vert a\vert_{\mathcal{B}_\tau^s}&=\left\{
\begin{array}{ll}
\displaystyle\left(\int_0^\infty \left[t^s E(t,a)\right]^\tau
\frac{dt}{t}\right)^{1/\tau}&\!\!\!\!\!\!\hbox{if }\tau<\infty\\[2ex]
\sup\limits_{0<t<\infty}t^s E(t,a)&\!\!\!\!\!\!\hbox{if }\tau=\infty,
\end{array}\right.
\end{split}
\end{equation}
where (by  \cite[Lemma 7.1.6]{bergh76})
$\kappa=2\kappa_1\max(\kappa_0^s,\kappa_1^s)\max(1,2^{-1/q'})\max(1,2^{s-1})$, $(1/q'=1-1/q).$
\end{definition}

Given a compatible couple $(\mathfrak{A}_0, \mathfrak{A}_1)$,  we also define
the quadratic functional of Lions-Peetre's type (see e.g.  \cite{McLean2000,PeetreSparr1972})
\[
K_2(t,a)=K_2(t,a;\mathfrak{A}_0, \mathfrak{A}_1)=
\inf\limits_{a=a_0+a_1}\left(\vert a_0\vert_0^2+t^2\vert a_1\vert_1^2\right)^{1/2},
\quad t>0.
\]
We will use the quadratically modified real interpolation method.
Define the interpolation Abelian group of  Lions-Peetre's type $\left(\mathfrak{A}_0,\mathfrak{A}_1\right)_{\vartheta,q}$ endowed
with the quasinorm ${\|\cdot\|_{(\mathfrak{A}_0,\mathfrak{A}_1)_{\vartheta,q}}}$,
\begin{align*}
K_{\theta,q}\left(\mathfrak{A}_0,\mathfrak{A}_1\right)&=\left(\mathfrak{A}_0,\mathfrak{A}_1\right)_{\vartheta,q}
=\big\{a\in \mathfrak{A}_0+\mathfrak{A}_1 \big\}, \\
\vert a\vert_{(\mathfrak{A}_0,\mathfrak{A}_1)_{\vartheta,q}} &=\left\{
\begin{array}{ll}
\displaystyle{\bigg(\int_0^\infty \left[t^{-\vartheta}
K_2(t,a)\right]^q\frac{dt}{t}\bigg)^{1/q}}&\!\!\!\!\hbox{if }q<\infty\\[2ex]
\sup\limits_{0<t<\infty}t^{-\vartheta} K_2(t,a)&\!\!\!\!\hbox{if }
q=\infty.
\end{array}\right.
\end{align*}

Following \cite{DL19,Lopushansky2023}, we extend the use of  approximation constants $c_{s,\tau}$ to the case of arbitrary quasi-normed Abelian groups $\mathcal{B}_{\tau}^s(\mathfrak{A}_0,\mathfrak{A}_1)$ described in the classic works \cite{PeetreSparr1972,pietsch1981}, where
\begin{equation}\label{crucial}
 c_{s,\tau} :=\left\{\begin{array}{cl}\displaystyle
\left[\frac{s}{\tau(s+1)^2}\right]^{1/\tau}&\hbox{if } \tau<\infty \\\displaystyle
   1 &\hbox{if }  \tau=\infty
 \end{array}\right.
\end{equation}
is determined  by the normalization factor $N_{\vartheta,q}$ from \cite[Thm 3.4.1]{bergh76} to be
\begin{equation}\label{crucial1}
c_{s,\tau}=N^{1/q}_{\vartheta,q}\left(\vartheta q^2\right)^{-1/q\vartheta},\quad N_{\vartheta,q}:=[q\vartheta(1-\vartheta)]^{1/q}.
\end{equation}
The following theorem establishes the bilateral form of approximation inequalities with exact constants
for the case of quasi-normed Abelian groups.

\begin{theorem}\label{t2.1}
The bilateral Bernstein-Jackson inequalities  with the exact constant ${c}_{s,\tau}$
\begin{align}\label{bernstein}
t^{s}E(t,a)&\le{c}_{s,\tau}\vert a\vert_{\mathcal{B}_{\tau}^{s}}\le2^{1/2}\vert a\vert^s_0\vert a\vert_1
\quad\text{for all}\quad a\in \mathfrak{A}_0\cap \mathfrak{A}_1
\end{align}
 hold. There is a unique extension of Jackson's inequality on the whole approximation scale
\begin{align}\label{jackson}
E(t,a)&\le t^{-s}{c}_{s,\tau}
\vert a\vert_{\mathcal{B}_{\tau}^{s}}\quad\text{for
all}\quad {a\in \mathcal{B}_{\tau}^{s}(\mathfrak{A}_0, \mathfrak{A}_1)}, \ \ r>0.
\end{align}
 \end{theorem}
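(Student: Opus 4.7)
The plan is to pass from the $E$-functional defining \eqref{bes} to the quadratic Lions--Peetre interpolation quasi-norm $\vert\cdot\vert_{(\mathfrak{A}_0,\mathfrak{A}_1)_{\vartheta,q}}$ via the identification $\mathcal{B}_\tau^s=(\mathfrak{A}_0,\mathfrak{A}_1)_{\vartheta,q}^{1/\vartheta}$ with $s+1=1/\vartheta$ and $\tau=\vartheta q$, and then to read off both halves of \eqref{bernstein} as the two sides of the standard log-convexity estimate for the interpolation scale, with the normalization constant $N_{\vartheta,q}$ of \cite[Thm~3.4.1]{bergh76} entering through \eqref{crucial1} to produce exactly the announced $c_{s,\tau}$.

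For the right half of \eqref{bernstein} I would fix $a\in\mathfrak{A}_0\cap\mathfrak{A}_1$ and start from the pointwise estimate $K_2(t,a)\le\min(\vert a\vert_0,\,t\vert a\vert_1)$, which follows from the trivial decompositions $a=a+0=0+a$. Substituting this into the defining integral for $\vert a\vert_{(\mathfrak{A}_0,\mathfrak{A}_1)_{\vartheta,q}}$ and splitting the range of integration at $t_0=\vert a\vert_0/\vert a\vert_1$ reduces both pieces to elementary power-law integrals whose sum equals $[q\vartheta(1-\vartheta)]^{-1}\vert a\vert_0^{(1-\vartheta)q}\vert a\vert_1^{\vartheta q}=N_{\vartheta,q}^{-q}\vert a\vert_0^{(1-\vartheta)q}\vert a\vert_1^{\vartheta q}$. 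Raising successively to the $1/q$ and $1/\vartheta$ powers turns the exponents $(1-\vartheta,\vartheta)$ into $(s,1)$, the normalization collapses precisely to $c_{s,\tau}^{-1}$ by \eqref{crucial1}, and the unavoidable passage from the quadratic $K_2$-functional back to the sum-type $K$-functional implicit in the $E$-defined $\vert\cdot\vert_{\mathcal{B}_\tau^s}$ contributes exactly the factor $2^{1/2}$ on the right.

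For the left half of \eqref{bernstein} I would use that $E(\cdot,a)$ is non-negative and non-increasing, together with the $K_2$-via-$E$ integral identity underlying the equivalence of the two interpolation representations, so that a sharp Hardy-type estimate applied to the integral in \eqref{bes} yields $t^s E(t,a)\le c_{s,\tau}\vert a\vert_{\mathcal{B}_\tau^s}$ with exactly the constant prescribed by \eqref{crucial}--\eqref{crucial1}. Once the Bernstein half has been proved on $\mathfrak{A}_0\cap\mathfrak{A}_1$, the extension \eqref{jackson} to the whole scale is routine: $\mathfrak{A}_0\cap\mathfrak{A}_1$ is dense in $\mathcal{B}_\tau^s$ in the $\mathcal{B}_\tau^s$-quasi-norm, $E(t,\cdot)$ is continuous in its second argument by the $\kappa$-triangle inequality, and the extension is uniquely determined. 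The hard part is not existence but the \emph{exactness} of the constants: the naive monotonicity bound $t^s E(t,a)\le(s\tau)^{1/\tau}\vert a\vert_{\mathcal{B}_\tau^s}$ is too crude to recover $c_{s,\tau}$, so one must route the argument through the sharp Hardy inequality tied to $N_{\vartheta,q}$, and the factor $2^{1/2}$ on the Jackson side is forced by a genuine gap between the quadratic functional $K_2$ parametrizing $(\mathfrak{A}_0,\mathfrak{A}_1)_{\vartheta,q}$ and the $E$-functional defining $\vert\cdot\vert_{\mathcal{B}_\tau^s}$; careful bookkeeping through \eqref{crucial1} is essential.
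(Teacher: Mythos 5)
Your treatment of the Bernstein (right-hand) half is essentially the paper's own argument: the pointwise bound $K_2(t,a)\le\min(\vert a\vert_0,t\vert a\vert_1)$, splitting the defining integral at $\vert a\vert_0/\vert a\vert_1$, and identifying the resulting constant with $N_{\vartheta,q}^{-1}$ reproduce \eqref{inq0001}, and passing to the $E$-defined quasi-norm via \eqref{20.5} gives the stated estimate. One bookkeeping remark: in that direction only the left inequality $K_\infty\le K_2$ of \eqref{inqK1} is used, so no factor $2^{1/2}$ is actually generated there; in the paper the $2^{1/2}$ enters on the opposite side, through $K_2\le 2^{1/2}K_\infty$ and \eqref{inqeqvintBes1}.

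The Jackson half is where your proposal has a genuine gap. You reduce it to an unspecified ``sharp Hardy-type estimate'' applied directly to the integral $\int_0^\infty\left[u^sE(u,a)\right]^\tau\frac{du}{u}$, i.e.\ to an inequality using only the monotonicity of $E(\cdot,a)$ that is supposed to produce the constant $c_{s,\tau}$. No such estimate exists: for the couple $\big(L^0_\mu,L^\infty_\mu\big)$ one has $E(\cdot,f)=f^*$, and taking $f=\chi_A$ with $\mu(A)=1$ gives $E(u,f)=1$ for $u\le1$, hence $\sup_t t^sE(t,f)=1$ while $\big(\int_0^\infty[u^sE(u,f)]^\tau\,du/u\big)^{1/\tau}=(s\tau)^{-1/\tau}$; so the best constant obtainable from monotonicity of $E$ alone is exactly the $(s\tau)^{1/\tau}$ you yourself call crude, and no smaller universal constant can come out of the $E$-integral by itself. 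The paper does not attempt such a Hardy inequality on the $E$-side at all: its Jackson argument runs through the $K$-side, namely (i) the quasi-concavity relation $\min(1,v/t)K_2(t,a)\le K_2(v,a)$, integrated against $v^{-\vartheta q}\,dv/v$ to get the pointwise bound $K_2(t,a)\le t^{\vartheta}N_{\vartheta,q}\vert a\vert_{(\mathfrak{A}_0,\mathfrak{A}_1)_{\vartheta,q}}$, (ii) the pointwise inversion relation \eqref{inqEK1} between $E$ and $K_\infty$ from Bergh--L\"ofstr\"om, Lemma 7.1.2, and (iii) the comparison \eqref{inqeqvintBes1} to return to $\vert a\vert_{\mathcal{B}_\tau^s}$. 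This route also makes the estimate valid directly for every $a\in\mathcal{B}_\tau^s(\mathfrak{A}_0,\mathfrak{A}_1)$, so the paper needs no density-plus-continuity extension from $\mathfrak{A}_0\cap\mathfrak{A}_1$; your extension step, by contrast, rests on a density claim that is unproved in the quasi-normed-group setting and fails for $\tau=\infty$. Without ingredients (i)--(iii), or an explicit substitute, the left-hand inequality and its extension are not established in your proposal, and the example above shows this is the heart of the matter rather than routine bookkeeping, since any proof of a constant below $(s\tau)^{1/\tau}$ must exploit information beyond the $E$-integral itself.
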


\begin{proof}
Let ${0< q <\infty}$ and $\alpha=\vert a\vert_{\mathfrak{A}_1}/\vert a\vert_{\mathfrak{A}_0}$ with a nonzero $a\in \mathfrak{A}_0\cap \mathfrak{A}_1$. Since
\begin{align*}
K_2(t,a)^2&=
\inf\limits_{a=a_0+a_1}\left(\vert a_0\vert_0^2+t^2\vert a_1\vert_1^2\right)\le\vert a\vert^2_0\min(1,\alpha^2t^2)={\min\left(\vert a\vert^2_0,t^2\vert a\vert^2_1\right)}
\end{align*}
or otherwise $K_2(t,a)\le{\vert a\vert_0\min(1,\alpha t)}
={\min\left(\vert a\vert_0,t\vert a\vert_1\right)},$ we get the inequality
\[
\vert a\vert^q_{(\mathfrak{A}_0,\mathfrak{A}_1)_{\vartheta,q}}\le\vert a\vert_1^q\int_0^\alpha
t^{-1+q(1-\vartheta)}dt+\vert a\vert^q_0\int_\alpha^\infty t^{-1-\vartheta q}dt.
\]
Calculating the integrals and using that $\alpha=\vert a\vert_1/\vert a\vert_0$, we obtain
\begin{align*}
\vert a\vert^q_{(\mathfrak{A}_0,\mathfrak{A}_1)_{\vartheta,q}}&\le\frac{\alpha^{q(1-\vartheta)}}{q(1-\vartheta)}
\vert a\vert_1^q+\frac{\alpha^{-\vartheta q}}{\vartheta q}\vert a\vert^q_0=\frac{1}{q\vartheta(1-\vartheta)}\left(\vert a\vert^{1-\vartheta}_0\vert a\vert_1^\vartheta\right)^q.
\end{align*}

Let it be now $q =\infty$. Then  the inequality
\[
t^{-\vartheta}K_2(t,a)\le{\min\left(t^{-\vartheta}\vert a\vert_0,t^{1-\vartheta}\vert a\vert_1\right)}
\]
holds. Taking in this case $t=\vert a\vert_0/\vert a\vert_1$, we obtain that $t^{-\vartheta}K_2(t,a)\le\vert a\vert^{1-\vartheta}_0\vert a\vert_1^\vartheta.$
Combining previous inequalities and taking into account \eqref{crucial1}, we have
\begin{equation}\label{inq0001}
\begin{split}
\|a\|_{(\mathfrak{A}_0,\mathfrak{A}_1)_{\vartheta,q}}&\le \left\{\begin{array}{ll}
  N_{\vartheta,q}^{-1}\vert a\vert^{1-\vartheta}_0\vert a\vert_1^\vartheta&\hbox{if } q <\infty \\[1.5ex]
\vert a\vert^{1-\vartheta}_0\vert a\vert_1^\vartheta&\hbox{if }q=\infty
\end{array}\right.,\quad a\in \mathfrak{A}_0\cap \mathfrak{A}_1.
\end{split}\end{equation}

For further considerations, we need the functional
\[
K_\infty(v,a):=\inf\limits_{a=a_0+a_1}\max\big(\vert a_0\vert_{\mathfrak{A}_0},v\vert a_1\vert_{\mathfrak{A}_1}\big).
\]
Now we will use the known properties of the considered functionals that
\begin{align*}
v^{-\theta}K_\infty(v,f)\to 0\quad \text{as \ } {v\to 0} \text{ or }  {v\to\infty}\\
{t^{-1+1/\theta}E(t,f)\to 0}\quad \text{as \ } {v\to 0} \text{ or }  {v\to\infty}
\end{align*}
(see \cite[Thm 7.1.7]{bergh76}). As a result, we get
\begin{align*}
\int_0^\infty\left(v^{-\vartheta}K_\infty(v,a)\right)^q\frac{dv}{v}&=-\frac{1}{\vartheta q}\int_0^\infty K_\infty(v,a)^qdv^{-\vartheta q}=\frac{1}{\vartheta q}\int_0^\infty v^{-\vartheta
 q}dK_\infty(v,a)^q.
\end{align*}
On the other hand,
integrating by parts with the change  $v=t/E(t,a)$, we get for $s+1=1/\vartheta$ that
\begin{align*}\label{18.5}
\int_0^\infty\left(v^{-\vartheta}K_\infty(v,a)\right)^q\frac{dv}{v}
&=\frac{1}{\vartheta q}\int_0^\infty\left(t/E(t,a)\right)^{-\vartheta q}dt^q=\frac{1}{\vartheta
q^2}\int_0^\infty\left(t^sE(t,a)\right)^{\vartheta q}\frac{dt}{t}.
\end{align*}

From the  definition  $K_\infty$ and $K_2$, it follows that
\begin{equation}\label{inqK1}
K_\infty(t,a)\leq K_2(t,a)\leq 2^{1/2}K_\infty(t,a)
\end{equation}
(see \cite[Remark 3.1]{PeetreSparr1972}). Now, taking into account that
\[
\frac{1}{\vartheta q^2}\|a\|^{\vartheta q}_{\mathcal{B}_{\tau}^{s}}=\frac{1}{\vartheta
q^2}\int_0^\infty\left(t^sE(t,a)\right)^{\vartheta q}\frac{dt}{t}
=\int_0^\infty\left(v^{-\vartheta}K_\infty(v,a)\right)^q\frac{dv}{v},
\]
according to the left inequality from \eqref{inqK1}, we obtain
\begin{equation}\label{20.5}
\frac{1}{\vartheta q^2}\vert a\vert^{\vartheta q}_{\mathcal
B_{\tau}^{s}}
\le\int_0^\infty\left(v^{-\vartheta}K_2(v,a)\right)^q\frac{dv}{v}=\vert a\vert^q_{(\mathfrak{A}_0,\mathfrak{A}_1)_{\vartheta,q}}.
\end{equation}
By using the right inequality from \eqref{inqK1}, we have
\begin{align*}
\vert a\vert^q_{(\mathfrak{A}_0,\mathfrak{A}_1)_{\vartheta,q}}
=\int_0^\infty\left(v^{-\vartheta}K_2(v,a)\right)^q\frac{dv}{v}
\le 2^{q/2}\int_0^\infty\left(v^{-\vartheta}K_\infty(v,a)\right)^q\frac{dv}{v} \\
=\frac{2^{q/2}}{\vartheta q^2}\int_0^\infty\left(t^sE(t,a)\right)^{\vartheta
q}\frac{dt}{t}
=\frac{2^{q/2}}{\vartheta q^2}\vert a\vert^{\vartheta
q}_{\mathcal B_{\tau}^{s}}.
\end{align*}
As a result, by virtue of \eqref{20.5} we have the inequalities
\begin{equation}\label{inqeqvintBes1}
\vert a\vert^q_{(\mathfrak{A}_0,\mathfrak{A}_1)_{\vartheta,q}}
\le\frac{2^{q/2}}{\vartheta q^2}\vert a\vert^{\vartheta q}_{\mathcal B_{\tau}^{s}}\le
2^{q/2}\vert a\vert^q_{(\mathfrak{A}_0,\mathfrak{A}_1)_{\vartheta,q}}\quad\text{with}\quad\tau=\vartheta q.
\end{equation}
That gives the following isomorphism with equivalent quasinorms
\begin{equation}\label{main}
\mathcal{B}_\tau^s(\mathfrak{A}_0,\mathfrak{A}_1)=\left(\mathfrak{A}_0,\mathfrak{A}_1\right)_{\vartheta,q}^{1/\vartheta}.
\end{equation}

By \cite[Lemma 7.1.2 and Thm 7.1.1]{bergh76} for each ${v>0}$ there is ${t>0}$ such that
\begin{equation}\label{inqEK1}
(t^sE(t,a))^{\vartheta}\le
v^{-\vartheta}K_\infty(v,a)\le\left(t^sE(t-0,a)\right)^\vartheta.
\end{equation}
Since $\vert a\vert^{\vartheta}_{\mathcal{B}_\infty^{s}}\le\vert a\vert_{(\mathfrak{A}_0,\mathfrak{A}_1)_{\vartheta,q}}$
for $q =\infty$, the inequalities \eqref{inqEK1} for any ${v>0}$ yield
\begin{align*}
v^{-\vartheta} K_2(v,a)&\le v^{-\vartheta} 2^{1/2}K_\infty(v,a)\le 2^{1/2}\big(t^sE(t-0,a)\big)^\vartheta\\
&\le2^{1/2}\Big(\sup_{t>0}\,t^{s}E(t,a)\Big)^\vartheta=2^{1/2}\vert a\vert^\vartheta_{\mathcal{B}_\infty^{s}}.
\end{align*}
As a result,
$\vert a\vert_{(\mathfrak{A}_0,\mathfrak{A}_1)_{\vartheta,q}}
\le2^{1/2}\vert a\vert^{\vartheta}_{\mathcal{B}_\infty^s}$  and the isomorphism \eqref{main} holds for $q=\infty$.

Prove the Bernstein inequality \eqref{bernstein}.  Combining  \eqref{inq0001} and \eqref{20.5}, we obtain
\begin{equation}\label{inq0002}
\vert a\vert^{\vartheta}_{\mathcal B_{\tau}^s}\le
\left\{\begin{array}{ll}\displaystyle
\left[\frac{q}{1-\vartheta}\right]^{1/q}\vert a\vert^{1-\vartheta}_0\vert a\vert_1^\vartheta&\hbox{if } q <\infty \\[2ex]
\vert a\vert^{1-\vartheta}_0\vert a\vert_1^\vartheta&\hbox{if }
q=\infty.
\end{array}\right.
\end{equation}
Thus, the second inequality \eqref{bernstein} we get by setting $s+1=1/\vartheta$ and  $\tau=\vartheta q$.

Prove  the inequality \eqref{jackson}. Integrating  ${\min(1,v/t)K_2(t,a)\le{K}_2(v,a)}$, we obtain
\[
\int_0^\infty\left(v^{-\vartheta}\min(1,v/t)\right)^q\frac{dv}{v}K_2(t,a)^q
\le\int_0^\infty\left(v^{-\vartheta}K_2(v,a)\right)^q\frac{dv}{v}=\vert a\vert_{(\mathfrak{A}_0,\mathfrak{A}_1)_{\vartheta,q}}^q.
\]
Since the left integral can be rewritten as the following sum
\[\begin{split}
\int_0^\infty\left(v^{-\vartheta}\min(1,v/t)\right)^q\frac{dv}{v}
&=\int_0^tv^{(1-\vartheta)q-1}t^{-q}dv+\int_t^\infty v^{-\vartheta q-1}dv=\frac{1}{t^\vartheta N_{\vartheta,q}^{1/q}},
\end{split}\]
we obtain the inequality
\[\begin{split}
\bigg(\int_0^t\frac{ v^{(1-\vartheta)q-1}}{t^q}dv+\int_t^\infty
v^{-\vartheta
q-1}dv\bigg)^{1/q}K_2(t,a)&=\frac{K_2(t,a)}{t^\vartheta
N_{\vartheta,q}}\le\vert x\vert_{(\mathfrak{A}_0,\mathfrak{A}_1)_{\vartheta,q}}.
\end{split}\]
As a result, $K_2(t,a)\le t^\vartheta N_{\vartheta,q}
\vert a\vert_{(\mathfrak{A}_0,\mathfrak{A}_1)_{\vartheta,q}}$. Hence, taking into account
(\ref{inqK1}) and  (\ref{inqEK1}), we get
\begin{equation*}
v^{1-\vartheta}E(v,a)^{\vartheta}\le
t^{-\vartheta}K_\infty(t,a)\le N_{\vartheta,q}
\vert a\vert_{(\mathfrak{A}_0,\mathfrak{A}_1)_{\vartheta,q}}.
\end{equation*}
Applying \eqref{inqeqvintBes1}, we obtain
\[
v^{1-\vartheta}E(v,a)^{\vartheta}\le{2^{1/2}}N_{\vartheta,q}
\vert a\vert^\vartheta_{\mathcal B_{\tau}^s}.
\]
Taking into account  that $\vartheta=1/(s+1)=q/\tau,$
we obtain the inequality \eqref{jackson} for all ${0<q<\infty}$.

In the case $q =\infty$, we have
\[
t^{s}E(t,a)\le \sup_{t>0}\,t^{s}E(t,a)=\vert a\vert_{\mathcal{B}_\infty^s}\]
for all ${a\in\mathcal{B}_\infty^s}$.  Thus, the  inequality
\eqref{jackson} holds for both cases.

Finally note that Bernstein-Jackson inequalities are achieved at $\tau = \infty$, so they are sharp.
\end{proof}

\begin{corollary}\label{c2.0}
If  the quasi-normed Abelian groups $\mathfrak{A}_0$, $\mathfrak{A}_1$ are compatible then the following isomorphism  with equivalent quasi-norms holds,
\begin{equation}\label{main1}
 \mathcal{B}_\tau^s(\mathfrak{A}_0,\mathfrak{A}_1)=\left(\mathfrak{A}_0,\mathfrak{A}_1\right)_{\vartheta,q}^{1/\vartheta},\quad s+1=1/\vartheta,\quad \tau=\vartheta q.
\end{equation}
If compatible groups $\mathfrak{A}_0$ and $\mathfrak{A}_1$ are complete, the approximation group $\mathcal{B}_\tau^s(\mathfrak{A}_0,\mathfrak{A}_1)$ is complete.
\end{corollary}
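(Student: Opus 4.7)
The first assertion is essentially already contained in the proof of Theorem~\ref{t2.1}. Indeed, the two-sided estimate \eqref{inqeqvintBes1} shows that $\vert a\vert_{\mathcal B_\tau^s}^{\vartheta}$ and $\vert a\vert_{(\mathfrak{A}_0,\mathfrak{A}_1)_{\vartheta,q}}$ are equivalent for $0<q<\infty$, while the final paragraph of that proof handles $q=\infty$ separately through \eqref{inqEK1}. Hence the identification \eqref{main1} is just the restatement of \eqref{main} already obtained, and no fresh computation is needed beyond pointing back to these bounds.

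For the completeness part, my plan is to use \eqref{main1} to transport the problem to the Lions--Peetre group $(\mathfrak{A}_0,\mathfrak{A}_1)_{\vartheta,q}$ and then run the standard $K$-method completeness argument in the quasi-normed setting. Given a Cauchy sequence $(a_n)$ in $(\mathfrak{A}_0,\mathfrak{A}_1)_{\vartheta,q}$, I first observe from the trivial bound $K_2(t,a)\ge c\,\min(1,t)\,\vert a\vert_{\mathfrak{A}_0+\mathfrak{A}_1}$ that $(a_n)$ is Cauchy in the sum group $\mathfrak{A}_0+\mathfrak{A}_1$, which is complete when both $\mathfrak{A}_\imath$ are; this yields a limit $a\in\mathfrak{A}_0+\mathfrak{A}_1$. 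Next, passing to a subsequence with $\vert a_{n_{k+1}}-a_{n_k}\vert_{(\mathfrak{A}_0,\mathfrak{A}_1)_{\vartheta,q}}$ summable (in the sense appropriate to the $\kappa$-norm), I apply Fatou's lemma to $v\mapsto v^{-\vartheta}K_2(v,a_n-a_{n_k})$ and let $k\to\infty$, obtaining $\vert a_n-a\vert_{(\mathfrak{A}_0,\mathfrak{A}_1)_{\vartheta,q}}\to 0$ and in particular $a$ in the interpolation group. Pulled back through \eqref{main1} with $s+1=1/\vartheta$ and $\tau=\vartheta q$, this gives completeness of $\mathcal B_\tau^s(\mathfrak{A}_0,\mathfrak{A}_1)$.

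The main technical point will be the passage to the limit inside the infimum defining $K_2$: one must produce, from near-optimal decompositions $a_n-a_{n_k}=u_0^{(k)}+u_1^{(k)}$, a decomposition of $a_n-a$ in $\mathfrak{A}_0+\mathfrak{A}_1$ that controls $K_2$ uniformly. Here completeness of each $\mathfrak{A}_\imath$ is used to extract limits $u_\imath$, and the $\kappa$-triangle inequality with the equivalence $\vert\cdot\vert'\le\vert\cdot\vert^\rho\le 2\vert\cdot\vert'$ recalled at the start of Section~\ref{2} is used to keep the quasi-norm constants under control. Once this bookkeeping is done, the standard argument closes with no further input beyond what has been established for Theorem~\ref{t2.1}.
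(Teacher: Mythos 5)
For the identification \eqref{main1} your plan coincides with the paper's: the corollary is just a restatement of \eqref{main}, obtained from \eqref{inqeqvintBes1} for $q<\infty$ and from the separate $q=\infty$ argument, so pointing back to Theorem~\ref{t2.1} is exactly what the paper does. For the completeness claim your route genuinely differs: the paper disposes of it in one line by citing Bergh--L\"ofstr\"om (Thm 3.4.2 and Lemma 3.10.2) for completeness of $(\mathfrak{A}_0,\mathfrak{A}_1)_{\vartheta,q}$ and then transfers it through the quasi-norm equivalence (the power $1/\vartheta$ changes nothing, since $\vert\cdot\vert^{\vartheta}$ defines the same Cauchy sequences), whereas you propose to re-prove that completeness by the standard $K$-method argument. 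Your second paragraph is indeed the standard proof and is sound: the lower bound $K_2(t,a)\ge \tfrac12\min(1,t)\,\vert a\vert_{\mathfrak{A}_0+\mathfrak{A}_1}$ makes a Cauchy sequence in the interpolation group Cauchy in the sum group, and then Fatou applied to $v\mapsto v^{-\vartheta}K_2(v,a_n-a_m)$, combined with the quasi-triangle inequality for $K_2$ and the upper bound $K_2(v,x)\le\max(1,v)\vert x\vert_{\mathfrak{A}_0+\mathfrak{A}_1}$, gives $\vert a_n-a\vert_{(\mathfrak{A}_0,\mathfrak{A}_1)_{\vartheta,q}}\to0$ (the case $q=\infty$ is immediate). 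What you buy is a self-contained argument; what the paper buys is brevity, since both facts you need are precisely the cited lemmas.

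Two caveats. First, completeness of $\mathfrak{A}_0+\mathfrak{A}_1$, which you simply assert, is itself exactly Lemma 3.10.2 of Bergh--L\"ofstr\"om; if you do not cite it you must prove it, via the series criterion for quasi-norms (take decompositions of the summands with $\vert x_n^0\vert_0+\vert x_n^1\vert_1\le 2\vert x_n\vert_{\mathfrak{A}_0+\mathfrak{A}_1}$ along a telescoping series with summable $\rho$-powers and sum the components in each $\mathfrak{A}_\imath$). Second, and more seriously, the device in your last paragraph does not work as stated: from near-optimal decompositions $a_n-a_{n_k}=u_0^{(k)}+u_1^{(k)}$ you cannot ``extract limits $u_\imath$'' by completeness, because completeness furnishes limits of Cauchy (or absolutely summable) sequences, not of merely bounded ones, and the near-optimal components of a Cauchy sequence need not be Cauchy in $\mathfrak{A}_0$ or $\mathfrak{A}_1$. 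Fortunately that step is dispensable: the Fatou argument of your second paragraph never requires passing to the limit inside the infimum defining $K_2$; the only place where limits of decomposition components are genuinely needed is in the completeness of the sum group, where the telescoping-series construction supplies the required summability. With the last paragraph deleted (or replaced by the series argument for $\mathfrak{A}_0+\mathfrak{A}_1$), your plan closes correctly.
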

\begin{proof}
The isomorphism \eqref{main1} immediately follows from \eqref{main} and Theorem~\ref{t2.1}. Note that if both compatible
groups $\mathfrak{A}_0$ and $\mathfrak{A}_1$ are complete, the interpolation group
$\left(\mathfrak{A}_0,\mathfrak{A}_1\right)_{\vartheta,q}$ is complete \cite[Thm 3.4.2 \& Lemma 3.10.2]{bergh76}.
By Theorem~\ref{t2.1} $\mathcal{B}_\tau^s(\mathfrak{A}_0,\mathfrak{A}_1)$ is complete.
\end{proof}

\begin{corollary}\label{2.4}
Let $(\mathfrak{A}_0,\mathfrak{A}_1)$, $(\mathfrak{A}'_0,\mathfrak{A}'_1)$ be compatible.
If $T\colon \mathfrak{A}_\imath\to\mathfrak{A}'_\imath$ $(\imath=0,1)$
are quasi-Abelian  mappings, i.e. ${T(a_0+a_1)}={b_0+b_1}$  and
$\vert Ta_\imath\vert_{\mathfrak{A}_\imath}\le \kappa_\imath\vert b_\imath\vert_{\mathfrak{A}'_\imath}$
with constants $\kappa_\imath$
 \cite[p.81]{bergh76}, then there exists a constant $C>0$ such that
\[T\colon \mathcal{B}_\tau^s(\mathfrak{A}_0,\mathfrak{A}_1)\to
\mathcal{B}_\tau^s(\mathfrak{A}'_0,\mathfrak{A}'_1)\quad \text{and}\quad\vert Ta\vert_{\mathcal{B}_\tau^s(\mathfrak{A}'_0,\mathfrak{A}'_1)}\le
C\vert a\vert_{\mathcal{B}_\tau^s(\mathfrak{A}_0,\mathfrak{A}_1)}.\]
\end{corollary}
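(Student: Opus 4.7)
The plan is to reduce the statement to a pointwise transport estimate for the $E$-functional under $T$, and then integrate this estimate against the measure $dt/t$ that defines $\vert\cdot\vert_{\mathcal{B}_\tau^s}$. This is the standard reformulation for interpolation via best-approximation functionals: once the functional $E(t,\cdot)$ is controlled coordinate-wise by $T$, the induced mapping of the approximation scales is automatic from its integral definition \eqref{bes}.

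First I would establish the transport inequality
\[
E(\kappa_0 t, Ta; \mathfrak{A}'_0, \mathfrak{A}'_1)\le \kappa_1\, E(t,a;\mathfrak{A}_0,\mathfrak{A}_1),\qquad t>0,\ a\in \mathfrak{A}_0+\mathfrak{A}_1.
\]
For any admissible $a_0\in\mathfrak{A}_0$ with $\vert a_0\vert_0 <t$, write $a=a_0+(a-a_0)$ and apply the quasi-Abelian hypothesis to get a decomposition $Ta=b_0+b_1$ with $\vert b_0\vert_{\mathfrak{A}'_0}\le\kappa_0\vert a_0\vert_0<\kappa_0 t$ and $\vert b_1\vert_{\mathfrak{A}'_1}\le\kappa_1\vert a-a_0\vert_1$. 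By definition \eqref{E2}, $b_0$ is then a competitor for $E(\kappa_0 t, Ta;\mathfrak{A}'_0,\mathfrak{A}'_1)$, giving $E(\kappa_0 t, Ta;\mathfrak{A}'_0,\mathfrak{A}'_1)\le \vert b_1\vert_{\mathfrak{A}'_1}\le\kappa_1\vert a-a_0\vert_1$. Taking the infimum over admissible $a_0$ yields the claimed inequality.

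The second step is to insert this estimate into \eqref{bes}. For $0<\tau<\infty$, the substitution $u=\kappa_0 t$ gives
\[
\vert Ta\vert^\tau_{\mathcal{B}_\tau^s(\mathfrak{A}'_0,\mathfrak{A}'_1)}
=\int_0^\infty\!\bigl[u^s E(u,Ta;\mathfrak{A}'_0,\mathfrak{A}'_1)\bigr]^\tau\frac{du}{u}
\le \kappa_0^{s\tau}\kappa_1^\tau\!\int_0^\infty\!\bigl[t^s E(t,a;\mathfrak{A}_0,\mathfrak{A}_1)\bigr]^\tau\frac{dt}{t},
\]
so the claim holds with $C=\kappa_0^s\kappa_1$. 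The case $\tau=\infty$ reduces to taking the supremum over $t$ in the transport inequality, producing the same constant $C=\kappa_0^s\kappa_1$. In particular, $T$ maps $\mathcal{B}_\tau^s(\mathfrak{A}_0,\mathfrak{A}_1)$ into $\mathcal{B}_\tau^s(\mathfrak{A}'_0,\mathfrak{A}'_1)$ as required.

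I do not expect any substantial obstacle: the main care needed is keeping track of the strict inequality $\vert a_0\vert_0<t$ in the definition of $E$ when transporting through $T$ (which forces the rescaling $t\mapsto\kappa_0 t$ rather than $\kappa_0 t+\varepsilon$), and verifying that the change of variable indeed absorbs the factor $\kappa_0^s$ in the weight $u^s$. Both are elementary once the transport inequality is in place.
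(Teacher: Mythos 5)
Your argument is correct, but it is genuinely different from the paper's. The paper proves this corollary in two lines by invoking the boundedness of $T\colon(\mathfrak{A}_0,\mathfrak{A}_1)_{\vartheta,q}\to(\mathfrak{A}'_0,\mathfrak{A}'_1)_{\vartheta,q}$ (the interpolation theorem for quasi-Abelian mappings) and then transferring the estimate to the approximation scale through the equivalence \eqref{inqeqvintBes1} and the isomorphism \eqref{main} established in Theorem~\ref{t2.1}; the price of that route is the extra factors coming from the comparison $K_\infty\le K_2\le 2^{1/2}K_\infty$ and the normalization, which is why the paper only records $C\le 4^{1/\vartheta}(\vartheta q^2)^{-1/\vartheta q}\kappa_0^s\kappa_1$. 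You instead work directly at the level of the $E$-functional \eqref{E2}: the transport inequality $E(\kappa_0 t,Ta;\mathfrak{A}'_0,\mathfrak{A}'_1)\le\kappa_1 E(t,a;\mathfrak{A}_0,\mathfrak{A}_1)$ is proved correctly (the strict inequality $\vert a_0\vert_0<t$ does give $\vert b_0\vert_{\mathfrak{A}'_0}<\kappa_0 t$, so $b_0$ is admissible), and the change of variable $u=\kappa_0 t$ in the defining integral, respectively the supremum for $\tau=\infty$, yields the bound with the explicit constant $C=\kappa_0^s\kappa_1$. This is more elementary — it uses neither Theorem~\ref{t2.1} nor the Lions-Peetre machinery — and it produces a cleaner and generally sharper constant, uniform in $\tau$; what the paper's route buys is brevity once the equivalence of scales is already in hand, and consistency with how all the other corollaries in Section~2 are derived. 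One small remark: the quasi-Abelian condition as printed in the corollary ($\vert Ta_\imath\vert_{\mathfrak{A}_\imath}\le\kappa_\imath\vert b_\imath\vert_{\mathfrak{A}'_\imath}$) is garbled; you silently used the standard definition from Bergh--L\"ofstr\"om, namely that every decomposition $a=a_0+a_1$ admits $Ta=b_0+b_1$ with $\vert b_\imath\vert_{\mathfrak{A}'_\imath}\le\kappa_\imath\vert a_\imath\vert_{\mathfrak{A}_\imath}$, which is clearly what is intended and is exactly what your transport inequality needs.
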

\begin{proof}
It follows from \eqref{inqeqvintBes1}, \eqref{main}  and the boundness  of
\[
T\colon(\mathfrak{A}_0,\mathfrak{A}_1)_{\vartheta,q}\to(\mathfrak{A}'_0,\mathfrak{A}'_1)_{\vartheta,q},
\]
where $C\le4^{1/\vartheta}(\vartheta q^2)^{-1/\vartheta q}\kappa_0^s\,\kappa_1$ that gives the required inequality.
\end{proof}

\begin{corollary}\label{c2.2}
Let $0\le\vartheta_0<\vartheta_1\le1$, ${0<q\le\infty}$ and $\vartheta=(1-\eta)\vartheta_0+\eta\vartheta_1$
${(0<\eta<1)}$. If the quasi-normed Abelian groups
$(\mathfrak{A}_0, \mathfrak{A}_1)$, $(\mathfrak{A}'_0, \mathfrak{A}'_1)$ are compatible and
 the inclusions \[(\mathfrak{A}_0, \mathfrak{A}_1)_{\vartheta_\imath,q}\subset\mathfrak{A}'_\imath\subset(\mathfrak{A}_0, \mathfrak{A}_1)_{\vartheta_\imath,\infty},\quad(\imath=0,1)\]
are valid, the following isomorphism  with equivalent quasinorms holds,
\begin{equation}\label{reit}
 \mathcal{B}_{\vartheta q}^{(1-\vartheta)/\vartheta}(\mathfrak{A}_0,\mathfrak{A}_1)=\mathcal{B}_{\eta q}^{(1-\vartheta)/\vartheta}(\mathfrak{A}'_0,\mathfrak{A}'_1).
\end{equation}
\end{corollary}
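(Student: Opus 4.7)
My plan is a two-step reduction via the isomorphism~\eqref{main1} from Corollary~\ref{c2.0} combined with the classical reiteration theorem for the Lions--Peetre $K$-method.

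The first step is to rewrite both sides of~\eqref{reit} as interpolation spaces. On the left, setting $s=(1-\vartheta)/\vartheta$ gives $s+1=1/\vartheta$, and with $\tau=\vartheta q$ the identity~\eqref{main1} yields
\[
\mathcal{B}_{\vartheta q}^{(1-\vartheta)/\vartheta}(\mathfrak{A}_0,\mathfrak{A}_1)=(\mathfrak{A}_0,\mathfrak{A}_1)_{\vartheta,q}^{1/\vartheta}
\]
with equivalent quasi-norm. Applying~\eqref{main1} analogously to $\mathcal{B}_{\eta q}^{(1-\vartheta)/\vartheta}(\mathfrak{A}'_0,\mathfrak{A}'_1)$ expresses it as a power of the corresponding interpolation space of the couple $(\mathfrak{A}'_0,\mathfrak{A}'_1)$, with parameters determined by the same relations $s+1=1/\vartheta$ and $\tau=\vartheta q$ read off from the $\mathcal{B}$-indices on the right.

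The second step is to invoke the reiteration theorem \cite[Thm.~3.5.3]{bergh76}. The sandwich hypothesis $(\mathfrak{A}_0,\mathfrak{A}_1)_{\vartheta_\imath,q}\subset\mathfrak{A}'_\imath\subset(\mathfrak{A}_0,\mathfrak{A}_1)_{\vartheta_\imath,\infty}$ is exactly what that theorem requires, and it yields, for every $0<r\le\infty$,
\[
(\mathfrak{A}'_0,\mathfrak{A}'_1)_{\eta,r}=(\mathfrak{A}_0,\mathfrak{A}_1)_{\vartheta,r},\qquad\vartheta=(1-\eta)\vartheta_0+\eta\vartheta_1,
\]
with equivalent quasi-norms. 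Substituting this identity into the interpolation-space form of the right-hand side of~\eqref{reit} obtained in step one and then reversing the identification~\eqref{main1} produces the left-hand side, so that the desired isomorphism follows by chaining the three equivalences.

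The delicate step I expect to be the main obstacle is the parameter bookkeeping: one must track the three coupled relations $s+1=1/\vartheta$, $\tau=\vartheta q$, and $\vartheta=(1-\eta)\vartheta_0+\eta\vartheta_1$ simultaneously through both applications of~\eqref{main1}, and verify that the outer interpolation index supplied by reiteration matches the index read off from $\tau=\eta q$ on the right-hand side so that the two $1/\vartheta$-th power quasi-norms are indeed equivalent. Once this matching is verified, \eqref{reit} follows by composing quasi-norm equivalences from Corollary~\ref{c2.0} on each side with the equivalence supplied by reiteration in the middle.
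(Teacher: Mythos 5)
Your proposal takes essentially the same route as the paper's own proof: identify both sides with powers of Lions--Peetre interpolation groups via \eqref{main1} and then apply the reiteration theorem under the sandwich hypothesis on $(\mathfrak{A}'_0,\mathfrak{A}'_1)$. The only differences are minor: the paper invokes the reiteration identity $(\mathfrak{A}_0,\mathfrak{A}_1)_{\vartheta,q}=(\mathfrak{A}'_0,\mathfrak{A}'_1)_{\eta,q}$ from \cite{PeetreSparr1972} and \cite[Theorem 3]{Komatsu1981}, which cover the quasi-normed Abelian group setting (rather than the normed-couple form of Bergh--L\"ofstr\"om's Theorem 3.5.3 you cite), and it passes over the same index bookkeeping between $\tau=\vartheta q$ and $\tau=\eta q$ that you flag but defer.
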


\begin{proof}
Using  \eqref{main} and the known equality
$\left(\mathfrak{A}_0,\mathfrak{A}_1\right)_{\vartheta,q}=\left(\mathfrak{A}'_0,\mathfrak{A}'_1\right)_{\eta,q}$
\cite{PeetreSparr1972}, \cite[Theorem 3]{Komatsu1981} in the case ${r=1}$, we  get the
reiteration identity \eqref{reit}  with accuracy to quasinorm
equivalency.
\end{proof}

\begin{corollary}\label{c2.3}
The following continuous embedding hold
\[ \mathcal{B}_{\vartheta q}^{(1-\vartheta)/\vartheta}(\mathfrak{A}_0,\mathfrak{A}_1)\looparrowright\mathcal{B}_{\vartheta p}^{(1-\vartheta)/\vartheta}(\mathfrak{A}_0,\mathfrak{A}_1)\quad
\text{with}\quad q<p.\]
\end{corollary}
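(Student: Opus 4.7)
The plan is to reduce the embedding to the familiar monotonicity of the $\mathcal{B}_\tau^s$-scale in the fine parameter. With $s=(1-\vartheta)/\vartheta$ fixed, set $\tau_1:=\vartheta q$ and $\tau_2:=\vartheta p$; since $q<p$ one has $\tau_1<\tau_2$, and the corollary is simply the inclusion $\mathcal{B}_{\tau_1}^s(\mathfrak{A}_0,\mathfrak{A}_1)\looparrowright\mathcal{B}_{\tau_2}^s(\mathfrak{A}_0,\mathfrak{A}_1)$. So the task is to bound $\vert a\vert_{\mathcal{B}_{\tau_2}^s}$ by $\vert a\vert_{\mathcal{B}_{\tau_1}^s}$ with an explicit constant.

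First I would extract from Jackson's inequality \eqref{jackson} of Theorem~\ref{t2.1} the uniform pointwise bound
\[
t^{s}E(t,a)\le c_{s,\tau_1}\,\vert a\vert_{\mathcal{B}_{\tau_1}^{s}}\qquad(t>0),
\]
which upon taking the supremum gives $\vert a\vert_{\mathcal{B}_\infty^s}\le c_{s,\tau_1}\vert a\vert_{\mathcal{B}_{\tau_1}^s}$. This already settles the boundary case $p=\infty$.

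For finite $p$, I would split the integral defining the target quasinorm as
\[
\vert a\vert_{\mathcal{B}_{\tau_2}^s}^{\tau_2}=\int_0^\infty\!\left[t^s E(t,a)\right]^{\tau_2-\tau_1}\!\left[t^s E(t,a)\right]^{\tau_1}\frac{dt}{t}\le\Big(\sup_{t>0} t^s E(t,a)\Big)^{\tau_2-\tau_1}\vert a\vert_{\mathcal{B}_{\tau_1}^s}^{\tau_1},
\]
which is legitimate because $E(t,a)\ge 0$. Substituting the bound on the supremum from the previous step, one obtains
\[
\vert a\vert_{\mathcal{B}_{\tau_2}^s}\le c_{s,\tau_1}^{\,1-\tau_1/\tau_2}\,\vert a\vert_{\mathcal{B}_{\tau_1}^s},
\]
which is the desired continuous embedding with an explicit constant expressed through the exact Bernstein--Jackson constant $c_{s,\tau_1}$.

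The main obstacle here is minimal, as Theorem~\ref{t2.1} already supplies the key ingredient. The only points requiring care are the non-negativity of $E(t,a)$ legitimising the factorisation, and treating the endpoint $p=\infty$ separately through the supremum form of the quasinorm; the value $q=\infty$ is excluded by the assumption $q<p$.
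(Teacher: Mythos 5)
Your argument is correct, but it follows a genuinely different route from the paper. The paper proves this corollary in one line: it invokes the isomorphism \eqref{main}, i.e. $\mathcal{B}_{\vartheta q}^{s}(\mathfrak{A}_0,\mathfrak{A}_1)=(\mathfrak{A}_0,\mathfrak{A}_1)_{\vartheta,q}^{1/\vartheta}$, and then cites the known monotonicity of the real interpolation scale in the second index, $(\mathfrak{A}_0,\mathfrak{A}_1)_{\vartheta,q}\looparrowright(\mathfrak{A}_0,\mathfrak{A}_1)_{\vartheta,p}$ for $q<p$, from \cite[Lemma p.~385]{Komatsu1981}. You instead work directly with the $E$-functional quasinorm: Jackson's inequality \eqref{jackson} gives the uniform bound $\sup_{t>0}t^{s}E(t,a)\le c_{s,\tau_1}\vert a\vert_{\mathcal{B}_{\tau_1}^{s}}$, and the factorization $\left[t^{s}E(t,a)\right]^{\tau_2}=\left[t^{s}E(t,a)\right]^{\tau_2-\tau_1}\left[t^{s}E(t,a)\right]^{\tau_1}$ (legitimate since $E\ge0$ and $\tau_2>\tau_1$) yields $\vert a\vert_{\mathcal{B}_{\tau_2}^{s}}\le c_{s,\tau_1}^{1-\tau_1/\tau_2}\vert a\vert_{\mathcal{B}_{\tau_1}^{s}}$, with the endpoint $p=\infty$ handled by the supremum form of the quasinorm. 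Your approach is more self-contained: it avoids both the identification \eqref{main} and the external reference, and it produces an explicit embedding constant expressed through the exact Bernstein--Jackson constant, which fits the spirit of the paper. What the paper's route buys is brevity and uniformity, since all the surrounding corollaries are derived from the single isomorphism \eqref{main}; what yours buys is an elementary quantitative proof that only uses Theorem~\ref{t2.1}. One small remark: the constant $c_{s,\tau_1}$ is not essential to your argument -- any bound of $\vert a\vert_{\mathcal{B}_\infty^{s}}$ by $\vert a\vert_{\mathcal{B}_{\tau_1}^{s}}$ would do, and such a bound can also be obtained directly from the monotonicity of $E(\cdot,a)$ without invoking Jackson -- but using the theorem as stated is perfectly fine here.
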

\begin{proof}
It  follows from \eqref{main}, since  for arbitrary $q<p$ the
following continuous embedding
$(\mathfrak{A}_0,\mathfrak{A}_1)_{\vartheta,q}\looparrowright(\mathfrak{A}_0,\mathfrak{A}_1)_{\vartheta,p}$ is
true by virtue of \cite[Lemma p. 385]{Komatsu1981}.
\end{proof}

\begin{corollary}
If $\vartheta_0<\vartheta_1$ then the embedding $\mathfrak{A}_1\subset \mathfrak{A}_0$
yields
\[\mathcal{B}_{\vartheta_1 q}^{(1-\vartheta_1)/\vartheta_1}(\mathfrak{A}_0,\mathfrak{A}_1)\subset\mathcal{B}_{\vartheta_0 q}^{(1-\vartheta_0)/\vartheta_0}(\mathfrak{A}_0,\mathfrak{A}_1).\]
\end{corollary}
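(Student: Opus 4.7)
The strategy is to use the real-interpolation isomorphism of Corollary~\ref{c2.0} to restate the claim in terms of Lions--Peetre spaces, and then verify the resulting set inclusion by a direct split of the defining integral.

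By the identity (\ref{main1}), one has $\mathcal{B}_{\vartheta_\imath q}^{(1-\vartheta_\imath)/\vartheta_\imath}(\mathfrak{A}_0,\mathfrak{A}_1)=(\mathfrak{A}_0,\mathfrak{A}_1)_{\vartheta_\imath,q}^{1/\vartheta_\imath}$ for $\imath=0,1$. As (\ref{inqeqvintBes1}) makes explicit, the exponent $1/\vartheta_\imath$ affects only the quasinorm (it rescales by a power), so the underlying sets of $\mathcal{B}_{\vartheta_\imath q}^{(1-\vartheta_\imath)/\vartheta_\imath}(\mathfrak{A}_0,\mathfrak{A}_1)$ and $(\mathfrak{A}_0,\mathfrak{A}_1)_{\vartheta_\imath,q}$ coincide. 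Therefore the asserted inclusion reduces to $(\mathfrak{A}_0,\mathfrak{A}_1)_{\vartheta_1,q}\subset(\mathfrak{A}_0,\mathfrak{A}_1)_{\vartheta_0,q}$.

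To establish this, fix $a\in(\mathfrak{A}_0,\mathfrak{A}_1)_{\vartheta_1,q}$. The hypothesis $\mathfrak{A}_1\subset\mathfrak{A}_0$ yields $\mathfrak{A}_0+\mathfrak{A}_1=\mathfrak{A}_0$, so $a\in\mathfrak{A}_0$ and in particular $\vert a\vert_0<\infty$. I would then split the integral defining $\vert a\vert_{(\mathfrak{A}_0,\mathfrak{A}_1)_{\vartheta_0,q}}$ at $t=1$. On $(0,1]$, because $\vartheta_0<\vartheta_1$ and $t\le 1$, the inequality $t^{-\vartheta_0}\le t^{-\vartheta_1}$ gives $\int_0^1(t^{-\vartheta_0}K_2(t,a))^q\,dt/t\le\vert a\vert^q_{(\mathfrak{A}_0,\mathfrak{A}_1)_{\vartheta_1,q}}$. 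On $[1,\infty)$, the trivial decomposition $a=a+0$ yields $K_2(t,a)\le\vert a\vert_0$, and the direct integration $\int_1^\infty t^{-\vartheta_0 q-1}\,dt=1/(\vartheta_0 q)$ (convergent since $\vartheta_0>0$) produces $\int_1^\infty(t^{-\vartheta_0}K_2(t,a))^q\,dt/t\le\vert a\vert_0^q/(\vartheta_0 q)$. Summing the two pieces shows that $a\in(\mathfrak{A}_0,\mathfrak{A}_1)_{\vartheta_0,q}$, as required. The case $q=\infty$ is analogous, with the supremum in place of the integral.

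There is no substantial obstacle in the argument; the only point worth noting is that the conclusion of the corollary is a set-theoretic inclusion (written $\subset$, in contrast to the continuous embedding $\looparrowright$ of Corollary~\ref{c2.3}), so it suffices to establish finiteness of the target quasinorm and no quantitative comparison between the two quasinorms is required. Should one wish to upgrade the result to a continuous embedding, the extra ingredient needed is a bound $\vert a\vert_0\lesssim\vert a\vert_{(\mathfrak{A}_0,\mathfrak{A}_1)_{\vartheta_1,q}}$; this can be obtained from a lower estimate $K_2(t,a)\gtrsim\vert a\vert_0$ valid for $t$ exceeding the embedding constant of $\mathfrak{A}_1\hookrightarrow\mathfrak{A}_0$, combined with a restriction of the defining integral of $\vert a\vert_{(\mathfrak{A}_0,\mathfrak{A}_1)_{\vartheta_1,q}}$ to a bounded interval on which such a lower bound is available.
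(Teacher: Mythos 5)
Your argument is correct, and it differs from the paper's in how the key inclusion is justified. Like the paper, you first use the power isomorphism \eqref{main1} (made quantitative by \eqref{inqeqvintBes1}, which shows the exponent $1/\vartheta_\imath$ only rescales the quasinorm and leaves the underlying set unchanged) to reduce the claim to the set inclusion $(\mathfrak{A}_0,\mathfrak{A}_1)_{\vartheta_1,q}\subset(\mathfrak{A}_0,\mathfrak{A}_1)_{\vartheta_0,q}$. At that point the paper simply cites Corollary~\ref{c2.2} together with \cite[Theorem 3.4.1(d)]{bergh76}, which is precisely this inclusion for the real interpolation spaces, whereas you prove it from scratch: split the defining integral at $t=1$, use $t^{-\vartheta_0}\le t^{-\vartheta_1}$ on $(0,1]$, and use the trivial decomposition $a=a+0$ (so $K_2(t,a)\le\vert a\vert_0$, with $\vert a\vert_0<\infty$ because $\mathfrak{A}_1\subset\mathfrak{A}_0$ gives $\mathfrak{A}_0+\mathfrak{A}_1=\mathfrak{A}_0$) together with $\int_1^\infty t^{-\vartheta_0 q-1}dt=1/(\vartheta_0 q)$ on $[1,\infty)$; the $q=\infty$ case is handled the same way with suprema. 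What your route buys is self-containedness and minimal hypotheses — only the set-theoretic inclusion $\mathfrak{A}_1\subset\mathfrak{A}_0$ is used, no continuity of that embedding — while the citation route delivers, at no extra cost, the continuous version of the inclusion with an explicit constant. Your reading that the corollary claims only a set inclusion ($\subset$, in contrast to $\looparrowright$ in Corollary~\ref{c2.3}) is consistent with the statement, and your closing sketch of how to upgrade to a continuous embedding (a lower bound $K_2(t,a)\gtrsim\vert a\vert_0$ for $t$ beyond the embedding constant of $\mathfrak{A}_1$ in $\mathfrak{A}_0$, then restriction of the $\vartheta_1$-integral to a bounded interval) is the standard and correct supplement.
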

\begin{proof}
It follows from Corollary~\ref{c2.2} and \cite[Theorem 3.4.1(d)]{bergh76}.
\end{proof}

\section{Exact constants in bilateral error estimates}\label{sec3}

Let ${(\mathfrak{A},\vert\cdot\vert)}$ be a $\kappa$-normed Abelian group.
Consider a quasi-normed space $L^p_\mu=L^p_\mu(\mathfrak{A})$ with a positive Radon measure
$\mu$ on a measure space $(X,\mu)$ of  measurable functions ${f\colon X\to\mathfrak{A}}$ endowed with the $\kappa$-norm
\begin{equation*}
\|f\|_p= \left\{\begin{array}{ll}\displaystyle
\Big(\int\vert f(x)\vert ^p\,\mu(dx)\Big)^{1/p} & \hbox{if }  0< p<\infty\\
\mathop{\rm ess\, sup}_{x\in X}\vert f(x)\vert & \hbox{if }p=\infty\\[1ex]
\mu\left(\Supp f\right) & \hbox{if }p=0,
\end{array}\right.\end{equation*}
where  $\Supp f\subset X$ is a measurable subset such that $f\mid_{X\setminus\Supp f}=0$
and ${f\ne 0}$ almost everywhere on $\Supp f$ with respect to the measure $\mu$.

For a $\mu$-measurable function ${f\colon X\to\mathfrak{A}}$, we define the distribution function
\[
m(\sigma,f)=\mu\left\{x\in X\colon {\vert f(x)\vert>\sigma}\right\}.
\]
Denote by $f^*$ the decreasing rearrangement of $f$, where
\[
f^*(t):=\inf\left\{\sigma>0\colon m(\sigma,f_\sigma)\le t\right\}
\]
with $f_\sigma(x)=f(x)$ if $\vert f(x)\vert>\sigma$ and $f_\sigma(x)=0$ otherwise
(for details see \cite[no~\!1.3]{bergh76}).
The decreasing rearrangement $f^*$ is nonnegative nonincreasing continuous on the right function of $\sigma$ on $(0,\infty)$
which is equimeasurable with $f$ in the sense that
\[
m(\sigma,f)=m(\sigma,f^*)\quad  \text{for all}\quad \sigma\ge 0.
\]

In other words, the decreasing rearrangement of a function $f$ is a generalized inverse of its
distribution function  in the sense that if $m(\sigma,f)$ is one-to-one then $f^*$ is simply the inverse of $m(\sigma,f)$.

Since $f$ is rearrangeable,
$\left\{\sigma>0\colon m(\sigma,f_\sigma)\le t\right\}$ is nonempty for $t>0$ thus $f^*$ is finite on its domain.

In the case where $\mu(X)<\infty$, we consider $f^*$ as a function on $(0,\mu(X))$, since
$f^*(t)=0$ for all $t>\mu(X)$.
The set $\left\{\sigma>0\colon m(\sigma,f_\sigma)=0\right\}$ can be empty. If $f$ is bounded then
$f^*\to\|f\|_\infty$ as $t\to0_+$,
otherwise, $f^*$ is unbounded at the origin.

As is also know (see e.g. \cite{ONeilWeiss1963}) for $f\in L^p_\mu$ with $p\ge1$ that
\[
\int\vert f\vert^p\,\mu(dx)=p\int_{0}^{\infty}\sigma^{p-1}m(\sigma,f)\,d\sigma,\quad \|f\|_p^p=\int_{0}^{\infty}f^*(t)^p\,dt.
\]
The couple
$\big(L^0_\mu,L^\infty_\mu\big)$  is compatible in the interpolation theory sense (see e.g. \cite[Lemma 3.10.3]{bergh76} or  \cite[no~\!1]{Komatsu1981}).
Given  elements $f={f_0+f_\infty}$ in the algebraic  sum ${L^0_\mu+L^\infty_\mu}$,  we define the
  best approximation $E$-functional $E\left(t,f;L^0_\mu,L^\infty_\mu\right)$ with ${t>0}$ as (see e.g. \cite[no~\!7]{bergh76})
\begin{align*}
E(t,f)&=E\big(t,f;L^0_\mu,L^\infty_\mu\big)= \inf\left\{\|f-f_0\|_\infty\colon\|f_0\|_0<t\right\},\quad f\in{L^\infty_\mu}.
\end{align*}

In what follows, we use the  equivalent parameters
 ${0<\vartheta<1}, \ {0<q\le\infty}$, where $s+1=1/\vartheta$ and $\tau=\vartheta q$.
 Following \cite{Lopushansky2023}, the appropriate system of exact constants $C_{\theta,q}$ is defined by the normalization factor
\begin{equation*}
N_{\theta,q}=\left(\int_{0}^{\infty}\vert t^{-\theta}\mathcal{N}(t)\vert^q\frac{dt}{t}\right)^{-1/q}\quad
\text{with}\quad \mathcal{N}(t)=\frac{t}{\sqrt{1+t^2}}
 \end{equation*}
in the known interpolation Lions-Peetre method  to be
\begin{equation}\label{crucial}
 \begin{split}
 C_{\theta,q}&=\left\{
 \begin{array}{cl}\displaystyle
 \frac{2^{1/2\theta}}{(q^2\theta)^{1/q\theta}}N_{\theta,q}^{1/\theta}&\hbox{if }q<\infty,\\[2ex]
 \displaystyle\left(\frac{\sin\pi\theta}{\pi\theta}\right)^{1/2\theta}&\hbox{if }q=2,\\[2ex]
 \displaystyle
 2^{1/2\theta}&\hbox{if }q=\infty.
 \end{array}\right.\end{split}
 \end{equation}

\begin{remark}
For this equivalent parameter system
the sharp constant $c_{s,\tau}$ takes an equivalent form
\begin{equation*}
2^{-1/2\theta}C_{\theta,q}=\left\{\begin{array}{cl}
c_{s,\tau} &\hbox{if } \tau<\infty \\
   1 &\hbox{if }  \tau=\infty
 \end{array}\right.,
\end{equation*}
where the following dependencies between indexes are performed,
\begin{equation*}
2^{-1/2\theta}C_{\theta,q}=\left[(1-\theta)/q\right]^{1/q\theta}=(\theta q^2)^{-1/q\theta} N_{\theta,q}^{1/\vartheta}.
 \end{equation*}
\end{remark}
A graphical image of the normalized sharp constant $ c_{s,\tau}$  in variables $s,\tau$ shown on Fig.~\!1.
\begin{figure}
\centering
\begin{minipage}{.5\textwidth}
 \includegraphics[height=2.5cm,width=.9\linewidth]{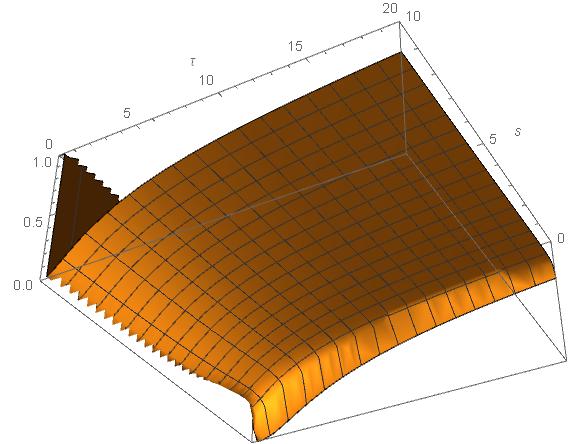}
  \end{minipage}%
\begin{minipage}{.5\textwidth}
  \includegraphics[width=.9\linewidth]{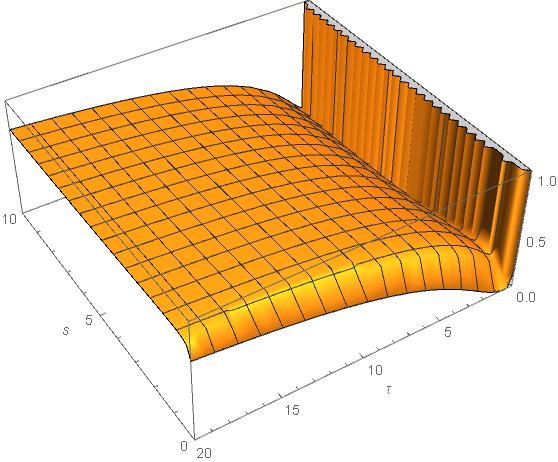}
\end{minipage}
\centering
\caption{3D-graph of $c_{s,\tau}=(s/\tau)^{1/\tau}(s+1)^{-2/\tau}$ for ${s,\tau>0}$}
\end{figure}

\begin{theorem}\label{1} {\rm(a)}
For any  ${0<\vartheta< 1}$ and   ${0<q\le\infty}$ the equality
\begin{equation}\label{E0infty}
f^*(t)=E\big(t,f;L^0_\mu,L^\infty_\mu\big),\quad f\in{L^\infty_\mu}
\end{equation}
and, as a consequence, the following equalities hold,
 \begin{equation}\label{besov}
 \|f\|_{q\theta^2/(1-\theta)}=\left\{
\begin{array}{ll}\displaystyle
\left(\int_0^\infty \left[t^{-1+1/\theta}f^*(t)\right]^{ q\theta}\frac{dt}{t}\right)^{1/q\theta}&\hbox{if }q<\infty\\[2ex]
\sup\limits_{0<t<\infty}t^{-1+1/\theta}f^*(t)&\hbox{if }q=\infty.
\end{array}\right.
 \end{equation}

{\rm(b)}
The  Jackson-type  inequality
\begin{equation}\label{jackson1}
f^*(t)\le  t^{1-1/\theta}C_{\theta,q}\|f\|_{ q\theta^2/(1-\theta)}\quad\text{for all}\quad{f\in L_\mu^{q\theta^2/(1-\theta)}},
\end{equation}
as well as, the following  bilateral Bernstein-Jackson-type inequalities are valid,
\begin{align}\label{bernstein1}
t^{-1+1/\theta}f^*(t)&\le {C}_{\theta,q}\|f\|_{q\vartheta^2/(1-\theta)}\le2^{1/2\theta}\|f\|^{-1+1/\theta}_0\|f\|_\infty
\text{ for all } f\in L^0_\mu.
\end{align}
\end{theorem}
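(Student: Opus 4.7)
The plan is to prove the concrete rearrangement identity in part (a) by hand, and then reduce parts (b) and the remaining assertions of (a) to Theorem~\ref{t2.1} applied to the couple $(L^0_\mu,L^\infty_\mu)$.

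For the identity $f^*(t)=E(t,f;L^0_\mu,L^\infty_\mu)$, I establish two opposite inequalities. To obtain $E(t,f)\le f^*(t)$, fix any $\sigma>f^*(t)$ and set $f_0:=f\chi_{\{|f|>\sigma\}}$, so $f-f_0=f\chi_{\{|f|\le\sigma\}}$ and hence $\|f-f_0\|_\infty\le\sigma$; moreover $\|f_0\|_0=\mu(\Supp f_0)\le m(\sigma,f)\le t$, where the last bound follows from the definition of $f^*$ as the right-continuous generalized inverse of $\sigma\mapsto m(\sigma,f)$. Passing to the limit $\sigma\downarrow f^*(t)$ yields $E(t,f)\le f^*(t)$. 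For the reverse, let $f_0$ be any competitor with $\|f_0\|_0<t$; outside $\Supp f_0$ one has $|f|=|f-f_0|\le\|f-f_0\|_\infty$, so $\{|f|>\|f-f_0\|_\infty\}\subseteq\Supp f_0$ modulo a null set, which gives $m(\|f-f_0\|_\infty,f)\le\|f_0\|_0<t$ and therefore $f^*(t)\le\|f-f_0\|_\infty$; taking the infimum over $f_0$ delivers $f^*(t)\le E(t,f)$.

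The second half of part~(a) follows by substituting $E(t,f)=f^*(t)$ into Definition~\ref{Bes0} with $s=(1-\theta)/\theta$ and $\tau=q\theta$: this produces exactly the integral on the right-hand side of \eqref{besov}, and the identification of the resulting Besov-type quasinorm $|f|_{\mathcal{B}_\tau^s(L^0_\mu,L^\infty_\mu)}$ with $\|f\|_{q\theta^2/(1-\theta)}$ comes from the rearrangement calculus for the couple $(L^0_\mu,L^\infty_\mu)$ together with the isomorphism \eqref{main}; the case $q=\infty$ is treated identically with suprema replacing integrals.

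For part~(b) I specialise Theorem~\ref{t2.1} to $\mathfrak{A}_0=L^0_\mu$ and $\mathfrak{A}_1=L^\infty_\mu$. The Jackson estimate \eqref{jackson} becomes $f^*(t)\le c_{s,\tau}t^{-s}\,|f|_{\mathcal{B}_\tau^s}$, which rewrites as \eqref{jackson1} once one substitutes $c_{s,\tau}=2^{-1/(2\theta)}C_{\theta,q}$ from the Remark and $-s=1-1/\theta$. In the same way the bilateral \eqref{bernstein} translates to \eqref{bernstein1}: the factor $2^{1/(2\theta)}$ on the right end arises from propagating the $K_2$--$K_\infty$ constant $2^{1/2}$ of \eqref{inqK1} through the power $1/\vartheta$ that appears in the isomorphism \eqref{main}. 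The main obstacle I expect is the constant bookkeeping across the two parametrisations $(s,\tau)$ and $(\theta,q)$, in particular verifying that the constants $c_{s,\tau}$, $C_{\theta,q}$ and the factor $2^{1/(2\theta)}$ combine so as to match the right-hand side of \eqref{bernstein1} exactly, rather than up to a further power of two.
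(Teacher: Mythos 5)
Your route is essentially the paper's: a direct two-sided verification of \eqref{E0infty}, followed by specialization of Theorem~\ref{t2.1} to the couple $(L^0_\mu,L^\infty_\mu)$. The two-sided argument for $f^*(t)=E(t,f)$ is correct and is in fact a cleaner rendition of the truncation argument the paper adapts from Bergh--L\"ofstr\"om (Lemma 7.2.1); it uses no triangle inequality, so it is valid for $\mathfrak{A}$-valued $f$, and (like the paper's own proof) it tacitly works with the non-strict form $\|f_0\|_0\le t$ of the $E$-functional rather than the strict inequality of \eqref{E2}. The genuine gap is your justification of \eqref{besov}. After substituting $E(t,f)=f^*(t)$ into Definition~\ref{Bes0}, what remains is precisely the claim that the resulting quasinorm equals $\|f\|_{q\theta^2/(1-\theta)}$ \emph{exactly}, and the isomorphism \eqref{main} cannot deliver this: by \eqref{inqeqvintBes1} it gives only equivalence of quasinorms up to explicit constants, whereas \eqref{besov} (and the exact constants in part (b), which rest on it) need an isometric identification. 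The paper closes this step by invoking \eqref{isometry}, i.e.\ the approximation theorem of Bergh--L\"ofstr\"om (Thm 7.2.2) identifying $E_{\theta,q}(L^0_\mu,L^\infty_\mu)$ with the scale $L_\mu^{q\theta^2/(1-\theta)}$; note that the elementary identity $\|f\|_p^p=\int_0^\infty f^*(t)^p\,dt$ does not by itself produce the weighted integral $\int_0^\infty\left[t^{-1+1/\theta}f^*(t)\right]^{q\theta}\,dt/t$, so your ``rearrangement calculus'' must mean exactly that cited theorem, which you need to quote or prove rather than gesture at.

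On part (b), deriving \eqref{jackson1} from \eqref{jackson} together with \eqref{besov} and $c_{s,\tau}=2^{-1/2\theta}C_{\theta,q}$ is fine (you even have the factor $2^{1/2\theta}$ to spare), but the obstacle you flag at the right end of \eqref{bernstein1} is real if you use only the statement of Theorem~\ref{t2.1}: the second inequality of \eqref{bernstein} carries the factor $2^{1/2}$, and transporting it yields $C_{\theta,q}\|f\|_{q\theta^2/(1-\theta)}\le 2^{1/2}\,2^{1/2\theta}\|f\|_0^{-1+1/\theta}\|f\|_\infty$, which is weaker than \eqref{bernstein1} by $2^{1/2}$. The repair is to go back inside the proof of Theorem~\ref{t2.1} to the intermediate estimate \eqref{inq0002}: raising it to the power $1/\vartheta$ gives $\vert a\vert_{\mathcal{B}_\tau^s}\le\left[q/(1-\vartheta)\right]^{1/\vartheta q}\vert a\vert_0^s\vert a\vert_1$, and since $c_{s,\tau}=\left[(1-\vartheta)/q\right]^{1/\vartheta q}$ this is exactly $c_{s,\tau}\vert a\vert_{\mathcal{B}_\tau^s}\le\vert a\vert_0^s\vert a\vert_1$ with no extra $2^{1/2}$; multiplying by $2^{1/2\theta}$ then gives the right-hand inequality of \eqref{bernstein1} as stated (the same sharpening is what \eqref{bernstein2} requires). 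With these two repairs --- citing or proving the isometry \eqref{isometry} instead of appealing to \eqref{main}, and using \eqref{inq0002} rather than the stated \eqref{bernstein} for the right end --- your argument coincides with the paper's proof.
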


\begin{proof} {\rm(a)}
By definition, the functional $E(t,f)$ is the infimum of $\|f-f_0\|_\infty$ such that ${\mu(\Supp f_0)\le t}$.
The next reasoning extends \cite[Lemma 7.2.1]{bergh76} on the case of $\mathfrak{A}$-valued functions.

Put $g_0(x)=f(x)$ on the subset $\Supp f$ and let $g_0(x)=0$ outside $\Supp f$.
Then we obtain ${\|f-g_0\|_\infty}\le{\|f-g\|_\infty}$.

In a similar way, let
$g_\sigma(x)=f(x)$ if $\vert f(x)\vert >\sigma$ and ${g_\sigma(x)=0}$ otherwise.
Then  for the number
$\tau={\sup\big\{\vert f(x)\vert\colon x\ne \Supp f\big\}}$ we get
$\Supp f_\tau\subset \Supp f$. It follows ${\mu(\Supp g_\tau)\le t}$. Since $\|f-g_\tau\|_\infty\le\tau$ and $\|f-g_0\|_\infty=\tau$,
we obtain
\[
E\big(t,f;L^0_\mu,L^\infty_\mu\big)={\inf}_\sigma\left\{\|f-g_\sigma\|_\infty\colon \mu\left(\Supp g_\sigma\right)\le t\right\},
\]
where the rigth hand side is equal to $f^*$. As a result, the equality \eqref{E0infty} holds.

Using the  best approximation $E$-functional, we define  the  quasi-normed space
 \begin{align}\label{bes}
E_{\theta,q}\big(L^0_\mu,L^\infty_\mu\big)&=\left\{f\in {L^0_\mu+L^\infty_\mu}\colon\|f\|_{E_{\theta,q}}<\infty\right\},
\\
\|f\|_{E_{\theta,q}}&=\left\{
\begin{array}{ll}\displaystyle
\left(\int_0^\infty \left[t^{-1+1/\theta}E(t,f)\right]^{ q\theta}\frac{dt}{t}\right)^{1/q\theta}&\hbox{if }q<\infty,\\[2ex]
\sup\limits_{0<t<\infty}t^{-1+1/\theta} E(t,f)&\hbox{if }q=\infty.
\end{array}\right.
\end{align}
By the known approximation theorem (see \cite[Thm 7.2.2]{bergh76}) the isometric isomorphism
\begin{equation}\label{isometry}
E_{\theta,q}\big(L^0_\mu,L^\infty_\mu\big)=L_\mu^{ q\theta^2/(1-\theta)},\quad \|f\|_{E_{\theta,q}}=\|f\|_{\vartheta^2 q/(1-\theta)}
\end{equation}
 holds for all ${f\in L_\mu^\infty}.$
Combining equalities \eqref{E0infty}, \eqref{bes} and \eqref{isometry}, we  get  \eqref{besov}.

{\rm(b)} In what follows, we use the classic integral of the real interpolation method
\begin{equation}\label{real}
\|f\|_{\theta,q}=\left(\int_{0}^{\infty}\left\vert t^{-\theta} f(x)\right\vert^q\frac{dt}{t}\right)^{1/q},
\quad {0<\theta<1}, \ 0< q<\infty.
\end{equation}
Let us consider the quadratic  $K_2$-functional (see e.g.  \cite[App. B]{McLean2000}) for
the interpolation couple of quasi-normed groups $\big(L^0,L^\infty\big)$,
\begin{align*}
K_2(t,f)&={K}_2\big(t,f;L^0_\mu,L^\infty_\mu\big)\\
&=\inf_{f=f_0+f_\infty}\left\{\left(\|f_0\|_0^2+t^2\|f_\infty\|_\infty^2\right)^{1/2}\colon
f_0\in L^0_\mu, \ f_\infty\in L^\infty_\mu\right\},
\end{align*}
determining the real interpolation quasi-normed Abelian  group
\begin{align*}
\big(L^0_\mu,L^\infty_\mu\big)_{\theta,q}:=
K_{\theta,q}\big(L^0_\mu,L^\infty_\mu\big)=\left\{f=f_0+f_\infty\colon\|K_2(\cdot,f)\|_{\theta,q}<\infty\right\}
\end{align*}
which is endowed with the norm
\begin{align*}
\|f\|_{K_{\theta,q}}&=\left\{
\begin{array}{ll}
N_{\theta,q}\|K_2(\cdot,f)\|_{\theta,q}&\hbox{if }q<\infty\\[1.5ex]
\sup\limits_{t\in(0,\infty)}t^{-\theta} K_2(t,f)&\hbox{if }
q=\infty.
\end{array}\right.\end{align*}

Note that for any  ${0<\vartheta< 1}$ and   ${q=2}$ the following equalities hold,
 \begin{align*}
 N_{\theta,2}=\|\mathcal{N}\|_{\theta,2}^{-1}= \|K_2(\cdot,1)\|_{\theta,2}^{-1}=\left(\frac{2\sin\pi\theta}{\pi}\right)^{1/2}.
 \end{align*}
In fact, the relationship between the weight function $\mathcal{N}(t)^2=t^2/(1+t^2)$ and the quadratic $K_2$-functional
is explained by the formula
\[
N_{\theta,2}=\|\mathcal{N}\|_{\theta,2}^{-1}= \|K_2(\cdot,1)\|_{\theta,2}^{-1}
\]
(see e.g. \cite[Ex. B.4]{McLean2000}). It follows from
\[
\min_{z=z_0+z_1}\left(\alpha_0\vert z_0\vert^2+\alpha_1\vert z_1\vert^2\right)={\alpha_0\alpha_1\vert z\vert^2}{\alpha_0+\alpha_1}
\]
for a fixed $\alpha_0,\alpha_1>0$ and a complex $z$. This minimum is achieved when
\[
\alpha_0z_0=\alpha_1z_1=\frac{\alpha_0\alpha_1z}{(\alpha_0+\alpha_1)}.
\]
Thus, $K_2(t,1)$ is minimized when $f_0$, $f_1$ are such that \[f_0=t^2f_1=\frac{t^2}{1+t^2}.\]

By integrating the above functions (see e.g. \cite[Ex. B.5, Thm B.7]{McLean2000}) it follows that
the normalization factor $N_{\theta,2}$  is equal to
$N_{\theta,2}=\left({2\sin\pi\theta}/{\pi}\right)^{1/2}.$

In particular,  the equalities \eqref{E0infty},
\begin{align*}
f^*(t)&=\inf\big\{\sigma\colon m(\sigma,f_\sigma)\le t\big\}\\[1ex]
&= \inf\left\{\|f-f_0\|_\infty\colon\|f_0\|_0<t\right\}=E\big(t,f;L^0_\mu,L^\infty_\mu\big),\quad f\in{L_\mu^\infty},
\end{align*}
as well as, the isometric isomorphism \eqref{isometry},
\begin{align*}
E_{\theta,q}\big(L^0_\mu,L^\infty_\mu\big)=L_\mu^{ q\theta^2/(1-\theta)},\quad \|f\|_{E_{\theta,q}}
=\|f\|_{\vartheta^2 q/(1-\theta)}
\end{align*}
allows calculating the exact form of  best constants \eqref{crucial}. Now, applying Theorem~\ref{t2.1} to
$\big(L^0_\mu,L^\infty_\mu\big)$, we obtain inequalities \eqref{jackson1}  and \eqref{bernstein1}. 
\end{proof}

\begin{corollary}\label{maincor}
The decreasing rearrangement for $q=2$  has the estimation
\begin{align}\label{q2}
f^*(t)&\le t^{1-1/\theta}\left(\frac{\sin\pi\theta}{\pi\theta}\right)^{1/2\theta}\!\|f\|_{2\theta^2/(1-\theta)}\quad
\text{for all}\quad {f\in L_\mu^{q\theta^2/(1-\theta)}}
\end{align}
which for $\theta=1/2$ can be written as follows
\begin{align}\label{l1}
f^*(t)&\le \frac{2}{\pi t}\,\int\vert f\vert\,\mu(dx)\quad\text{for all}\quad f\in L^1_\mu.
\end{align}
\end{corollary}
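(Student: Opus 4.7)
The plan is to obtain both inequalities as direct specializations of the Jackson-type inequality \eqref{jackson1} from Theorem~\ref{1}. That theorem already supplies the general estimate
\[
f^*(t)\le t^{1-1/\theta}C_{\theta,q}\|f\|_{q\theta^2/(1-\theta)},
\]
so what remains is to substitute the explicit values of $C_{\theta,q}$ furnished by \eqref{crucial}.

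For the first claim I would set $q=2$ and read off from \eqref{crucial} that
\[
C_{\theta,2}=\left(\frac{\sin\pi\theta}{\pi\theta}\right)^{1/2\theta}.
\]
Plugging this into the Jackson-type bound immediately produces \eqref{q2}. The only care needed is that the value of $C_{\theta,2}$ in \eqref{crucial} is derived via the normalization identity $N_{\theta,2}=(2\sin\pi\theta/\pi)^{1/2}$ computed inside the proof of Theorem~\ref{1}; one should just cite that identity rather than recompute it.

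For the second claim I would specialize further to $\theta=1/2$ (keeping $q=2$). Then $1-1/\theta=-1$, so the prefactor becomes $t^{-1}$; the exponent $q\theta^2/(1-\theta)=2\cdot(1/4)/(1/2)=1$, so the norm on the right is the genuine $L^1_\mu$-norm $\int|f|\,\mu(dx)$; and the constant evaluates to
\[
C_{1/2,2}=\left(\frac{\sin(\pi/2)}{\pi/2}\right)^{1}=\frac{2}{\pi}.
\]
Assembling these three substitutions yields \eqref{l1}.

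There is no real obstacle here beyond bookkeeping: the entire content is in Theorem~\ref{1}(b), and the corollary just picks out two numerically convenient parameter choices. The only step that needs a sentence of justification is the evaluation of $C_{\theta,2}$, since its closed form relies on the explicit computation of the normalization factor $N_{\theta,2}$; I would point the reader to that computation in the proof of Theorem~\ref{1} and then let the two substitutions close the argument.
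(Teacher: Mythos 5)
Your proposal is correct and follows the same route as the paper: Corollary~\ref{maincor} is obtained exactly by specializing the Jackson-type inequality \eqref{jackson1} of Theorem~\ref{1}(b) to $q=2$, using the closed form $C_{\theta,2}=\left(\frac{\sin\pi\theta}{\pi\theta}\right)^{1/2\theta}$ coming from the normalization $N_{\theta,2}=\left(2\sin\pi\theta/\pi\right)^{1/2}$ computed in that proof, and then setting $\theta=1/2$ so that $t^{1-1/\theta}=t^{-1}$, $q\theta^2/(1-\theta)=1$ and $C_{1/2,2}=2/\pi$. Nothing further is needed.
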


Note that if $\theta\to1$ then $\sin\pi\theta\to0$ thus above inequalities disappear.
One has a sense only for the quasi-normed group $L_\mu^{2\theta^2/(1-\theta)}$ with $0<\theta<1$.

\begin{corollary}
The equality \eqref{besov}  with $s+1=1/\vartheta$ and $\tau=\vartheta q$ takes the form
 \[
 \|f\|_{ \tau/s}=\left\{
\begin{array}{ll}\displaystyle
\left(\int_0^\infty \left[t^sf^*(t)\right]^\tau\frac{dt}{t}\right)^{1/\tau}&\hbox{if }\tau<\infty\\[2ex]
\sup\limits_{0<t<\infty}t^sf^*(t)&\hbox{if }\tau=\infty
\end{array}\right.
 \]
 for any $f\in L^{\tau/s}_\mu$. Then the  Jackson-type  inequality takes the form
\begin{equation}\label{jackson2}
f^*(t)\le  t^{-s}2^{(s+1)/2}\,c_{s,\tau}\|f\|_{ \tau/s}\quad\text{for all}\quad{f\in L^{ \tau/s}_\mu}
\end{equation}
 and bilateral Bernstein-Jackson-type inequalities take the form
\begin{align}\label{bernstein2}
t^{s}2^{-(s+1)/2}f^*(t)&\le c_{s,\tau}\|f\|_{ \tau/s}
\le\|f\|^{s}_0\|f\|_\infty\quad\text{for all}\quad{f\in L^0_\mu\cap L^\infty_\mu}.
\end{align}
\end{corollary}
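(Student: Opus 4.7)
The plan is to regard this corollary as a pure reparametrization of Theorem~\ref{1}, and translate each of its three statements through the change of variables $\vartheta=1/(s+1)$, $q=\tau(s+1)$ (equivalently $s=(1-\vartheta)/\vartheta$, $\tau=\vartheta q$). First I would verify the index identities that make the new notation consistent, namely that $-1+1/\vartheta=s$, $1-1/\vartheta=-s$, and $q\vartheta^{2}/(1-\vartheta)=\tau/s$. With these in hand, the displayed equality for $\|f\|_{\tau/s}$ is precisely \eqref{besov} rewritten; the integrand $t^{-1+1/\vartheta}f^{*}(t)$ becomes $t^{s}f^{*}(t)$, and the exponent $q\vartheta$ in \eqref{besov} coincides with $\tau$.

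Next I would translate the constant. The Remark immediately preceding Theorem~\ref{1} records the identity $2^{-1/(2\vartheta)}C_{\theta,q}=c_{s,\tau}$ for $\tau<\infty$, which under $1/\vartheta=s+1$ reads $C_{\theta,q}=2^{(s+1)/2}\,c_{s,\tau}$. Substituting this expression, together with the index identities above, into Jackson's inequality \eqref{jackson1} produces
\[
f^{*}(t)\le t^{-s}\,2^{(s+1)/2}\,c_{s,\tau}\,\|f\|_{\tau/s},
\]
which is exactly \eqref{jackson2}.

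For the bilateral Bernstein--Jackson inequality I would apply the same substitutions to \eqref{bernstein1}. The left factor $t^{-1+1/\vartheta}$ becomes $t^{s}$, the middle constant $C_{\theta,q}$ becomes $2^{(s+1)/2}c_{s,\tau}$, and the outer factor $2^{1/(2\vartheta)}$ becomes $2^{(s+1)/2}$; dividing through by $2^{(s+1)/2}$ moves this factor to the leftmost term and yields \eqref{bernstein2}. The case $\tau=\infty$ (equivalently $q=\infty$) is handled analogously, using the supremum branch of \eqref{besov} and the agreement $c_{s,\infty}=1$ from \eqref{crucial} together with $C_{\theta,\infty}=2^{1/(2\vartheta)}$ from the Remark.

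There is essentially no analytical obstacle here, as all the hard work was already done in Theorems~\ref{t2.1} and~\ref{1}. The only real care required is bookkeeping: one must be disciplined about which exponents and which normalization factors correspond to each other under the two parametrizations, and one must verify that the $\tau=\infty$ endpoint behaves correctly in each of the three displayed formulas. Once those correspondences are tabulated, the corollary follows by direct substitution.
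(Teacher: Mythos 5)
Your proposal is correct and follows essentially the same route as the paper, which simply notes that the corollary follows from the substitution $\vartheta=1/(s+1)$, $\tau=\vartheta q$ together with the relation $C_{\theta,q}=2^{(s+1)/2}c_{s,\tau}$ (and $C_{\theta,\infty}=2^{(s+1)/2}$) applied to \eqref{besov}, \eqref{jackson1} and \eqref{bernstein1}. Your version merely spells out the index bookkeeping that the paper leaves implicit.
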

It instantly follows from the relations
\begin{equation*}
C_{\theta,q}=\left\{\begin{array}{cl}
2^{(s+1)/2}\,c_{s,\tau} &\hbox{if } \tau<\infty \\
2^{(s+1)/2} &\hbox{if }  \tau=\infty
 \end{array}\right.,\quad\text{where}\quad \vartheta=\frac{1}{s+1},\quad 2^{1/2\theta}=2^{(s+1)/2}.
\end{equation*}

\begin{remark}\label{r4}
The inequalities \eqref{bernstein} is an extension on the  case measurable functions the known
best approximation Bernstein-Jackson inequalities.
Whereas the inequality \eqref{jackson} is an extension of the approximation Jackson inequality.
The scale of approximation quasi-normed Abelian  groups is often denoted as
\[
\mathcal{B}_\tau^s(L^0_\mu,L^\infty_\mu):=E_{\theta,q}(L^0_\mu,L^\infty_\mu),
\]
where the space $\mathcal{B}_\tau^s$ coincides with a suitable extension of the Besov type quasi-normed groups (see e.g. \cite[Thm 7.2.4]{bergh76}, \cite[Def.~\!4.2.1/1]{Triebel78}).
The scale of  quasi-normed Besov Abelian  groups for another  approximation  couples  is described in \cite{DL19,Feichtinger2016,Pesenson2024}.
\end{remark}

\section{Examples of bilateral estimates with exact constants}\label{sec4}

Taking into account the previous statements, we can give typical examples of Bernstein-Jackson inequalities
with explicit constants for different types of best approximations.

\begin{example}[Applications to classic Besov scales]
Let ${\mathfrak{A}_0= L^p(\mathbb{R}^n)}$ with ${(1<p<\infty)}$.
As is known (see e.g. \cite{Nikolski75})  each real-valued function ${a\in L^p(\mathbb{R}^n)}$
can be approximated by entire analytic functions $g\in\mathcal{E}_p^t(\mathbb{C}^n)$, where
$\mathcal{E}_p^t(\mathbb{C}^n)$ means the space of entire analytic functions on $\mathbb{C}^n$
of an exponential type ${t>0}$ with restrictions to  $\mathbb{R}^n$ belonging to $L^p(\mathbb{R}^n)$.
The best approximations can be characterized by the best approximation functional
\[
E\left(t,a;\mathcal{E}_p,L^p(\mathbb{R}^n)\right)= \inf_{t>0}\left\{\|a-g\|_{L^p(\mathbb{R}^n)}\colon g\in\mathcal{E}_p^t(\mathbb{C}^n)\right\},
\]
where the subspace  ${\mathfrak{A}_1= \mathcal{E}_p}$ with
$\mathcal{E}_p={\bigcup}_{t>0} \mathcal{E}_p^t(\mathbb{C}^n)$ is endowed with the quasinorm
\[
\vert g\vert_{\mathcal{E}_p}=\|g\|_{L^p(\mathbb{R}^n)}+\left\{\sup{\vert\zeta\vert\colon\zeta \in\Supp\hat{g}}\right\},\quad g\in\mathcal{E}_p
\]
defined using the support $\Supp\hat{g}$ of the Fourier-image $\hat{g}$.
Then according to Theorem~\ref{t2.1} the corresponding approximation inequalities take the form
\begin{align*}
t^{s}E\left(t,a;\mathcal{E}_p,L^p(\mathbb{R}^n)\right)&\le
c_{s,\tau}\|f\|_{B_{p,\tau}^s(\mathbb{R}^n)}\le2^{1/2}
\vert a\vert^s_{\mathcal{E}_p}\|a\|_{L^p}, \  {a\in \mathcal{E}_p\cap L^p(\mathbb{R}^n)},\\
E\left(t,a;\mathcal{E}_p,L^p(\mathbb{R}^n)\right)&\le{t^{-s}c_{s,\tau}}\,\|a\|_{B_{p,\tau}^s(\mathbb{R}^n)}, \quad
{a\in {B_{p,\tau}^s(\mathbb{R}^n)}},
\end{align*}
In this case the approximation scale
\[
\mathcal{B}_\tau^s\big(\mathcal{E}_p,L^p(\mathbb{R}^n)\big)
=\left(\mathcal{E}_p, L^p(\mathbb{R}^n)\right)_{\vartheta,q}^{1/\vartheta}
\]
exactly coincides with the classic Besov scale denoted by $B_{p,\tau}^s(\mathbb{R}^n)$  (see \cite[p.197]{Triebel78}).
\end{example}

\begin{example}[Applications to scales of periodic functions]
Following e.g. \cite[no 1.5]{bergh76}, \cite{Prestin} we can write Bernstein-Jackson inequalities in a more general form. Let
$X=\mathbb{T}$ be the $1$-dimensional torus and the Hilbert space $\mathfrak{A}_1=L^2(\mathbb{T})$ has the orthonormal basis
$\left\{\mathfrak{e}_k=e^{2\pi\mathrm{i}kt}\colon k\in\mathbb{Z},\,t\in\mathbb{T}\right\}$.
Let $\mathfrak{A}_0=\{\text{trigonometric polynomials \ } a_0\}$ with the quasi-norm $\vert a_0\vert_0=\Deg a_0$,  and
$\mathfrak{A}_1=\{\text{$2\pi$-periodic functions \ } a\}$ with the norm $\vert a\vert_1=\|a\|_{L^2(\mathbb{T})}$,  as well as,
$E(n,a;\mathfrak{A}_0,\mathfrak{A}_1)= \inf\left\{\|a-a_0\|_{L^2(\mathbb{T})}\colon\Deg a _0<n\right\}$ for all ${a\in L^2(\mathbb{T})}$.
Let $ c_{j,\tau} :=\left\{\begin{array}{cl}\displaystyle
\left[\frac{j}{\tau(j+1)^2}\right]^{1/\tau}&\hbox{if } \tau<\infty \\\displaystyle
   1 &\hbox{if }  \tau=\infty\end{array}\right.$ with $j\in\mathbb{N}$.
Then Bernstein-Jackson inequalities have the form
\begin{align*}
n^{j}E(n,a)&\le{c}_{j,\tau}\vert a\vert_{\mathcal{B}_{\tau}^{j}}\le2^{1/2}\vert a\vert^j_0\vert a\vert_1
\quad\text{for all}\quad a\in \mathfrak{A}_0\cap \mathfrak{A}_1,\\
E(n,a)&\le n^{-j}{c}_{j,\tau}
\vert a\vert_{\mathcal{B}_{\tau}^{j}}\quad\text{for
all}\quad {a\in \mathcal{B}_{\tau}^{j}(\mathfrak{A}_0, \mathfrak{A}_1)},
\end{align*}
where $\mathcal{B}_\tau^s(\mathfrak{A}_0,\mathfrak{A}_1)=\left(\mathfrak{A}_0,\mathfrak{A}_1\right)_{\vartheta,q}^{1/\vartheta}$ with
 $j+1=1/\vartheta\in\mathbb{N}$, $\tau=\vartheta q$.

For $\mathfrak{A}_1=L^\infty(\mathbb{T})$ with $\vert a\vert_1=\|a\|_{L^\infty(\mathbb{T})}$
and $\mathfrak{A}_0=L^0(\mathbb{T})$ with $\vert a_0\vert_0=\Deg a_0$, we consider
 the Banach space $L^\infty(\mathbb{T})$-valued functions ${f\colon X\to L^\infty(\mathbb{T})}$.
Then bilateral Bernstein-Jackson inequalities have the form
\begin{align*}
n^{-j}2^{-(j+1)/2}f^*(n)&\le c_{j,\tau}\|f\|_{ \tau/j}
\le\|f\|^{j}_0\|f\|_\infty,\quad{f\in L^0\cap L^\infty},\\
f^*(n)&\le n^{j}2^{(j+1)/2}c_{j,\tau}\|f\|_{ \tau/j},\quad{f\in L^{ \tau/j}},
\end{align*}
where $f^*(n)=E\big(n,f;L^0,L^\infty\big)=\inf\big\{\sigma\colon m(\sigma,f_\sigma)\le t\big\}$ for all $f\in{L^\infty}$.
\end{example}

\begin{example}[Applications to operator spectral approximations]
Consider the example commonly used in  foundations of quantum systems
for describing quantum states (see e.g. \cite[no~\!3.4]{Hall2013}).
Similar estimates of spectral approximations were early analyzed in the paper \cite{DL19}.
In what follows, we consider the spaces $L^p=L^p(\mathbb{R};\mathbb{R})$
of real-valued functions.

Let $H$ be a Hilbert complex space with the norm $\|\cdot\|_H={\langle\cdot\mid\cdot\rangle^{1/2}}$ and
a self-adjoint unbounded
linear operator $T$ with the dense domain $\mathcal{D}(T)\subset H$ is given.
By spectral theorem the measurable function ${f}$ from $T$  can be well defined using the following spectral expansions
(see e.g. \cite{Hall2013})
\[
T=\int_{\sigma(T)}\lambda\,\mu(d\lambda),\quad {f}(T)=\int_{\sigma(T)}{f}(\lambda)\,\mu(d\lambda),
\]
 where $\mu$ is a unique projection-valued measure determined on its spectrum $\sigma(T)\subset\mathbb{R}$
with values in the Banach space of bounded linear operators $\mathcal{L}(H)$, which can be extended on $\mathbb{R}\setminus\sigma(A)$  as zero.

At beginning, we consider the case when
for any Borel set $\Sigma\subset\mathbb{R}$ and any $\psi\in\mathcal{D}(A)$ the $\mathcal{L}(H)$-valued measurable function
\[
\Sigma\longmapsto f(\Sigma)=\int_{\Sigma}{f}(\lambda)\,\mu_\psi(d\lambda)
\]
belongs to $L^\infty$ with respect to  the positive measure
$\mu_\psi(d\lambda):=\big\langle\mu(d\lambda) \psi\mid\psi\big\rangle$, where $\|\psi\|_H=1$.

Consider the corresponding quadratic forms  $f_\psi(\Sigma)=\big\langle f(\Sigma)\psi\mid\psi\big\rangle$ as measurable functions from $L^2$
and let $f^*_\psi$ be the suitable decreasing rearrangement.
By Theorem~\ref{main} the inequalities \eqref{jackson} and \eqref{bernstein} in this case take the form
\begin{align}\label{jackson3}
f_\psi^*(t)&\le  t^{1-1/\theta}C_{\theta,q}\|f_\psi\|_{ q\theta^2/(1-\theta)},
\quad{f_\psi\in L_\mu^{ q\theta^2/(1-\theta)}},\\[1.5ex]\label{bernstein3}
t^{-1+1/\theta}f_\psi^*(t)&\le {C}_{\theta,q}\|f_\psi\|_{q\vartheta^2/(1-\theta)}\le2^{1/2\theta}\|f_\psi\|^{-1+1/\theta}_0\|f_\psi\|_\infty
\end{align}
for all $f_\psi\in L^0_\mu\cap L^\infty_\mu$,
where
$\|f_\psi\|_\infty=\sup_{\Sigma\subset\mathbb{R}}\vert  f_\psi(\Sigma)\vert$ and  $\|f_\psi\|_0=\mu_\psi\left(\Supp f_\psi\right)$.
Applying the estimation \eqref{l1} from Corollary~\ref{maincor}, we obtain
\begin{align}\label{spectrum}
f_\psi^*(t)&\le \frac{2}{\pi t}\,\int\vert f(\lambda)\vert \,\mu(d\lambda)
\quad\text{for all}\quad f_\psi\in L^1_\mu.
\end{align}

The inequality \eqref{jackson3},  \eqref{spectrum}  give error estimations
(in quadratic forms terms)  of   spectral approximations
$f_\psi(T)={\langle f(T)\psi\mid\psi\rangle}$
by  $f_\psi(\Sigma)$ with $\mu\left(\Sigma\right)\le t$.
The inequalities \eqref{bernstein1} characterise the approximation accuracy.

In another case, using the $\mathcal{L}(H)$-valued measurable uniformly bounded functions
\[
\tilde{f}\colon\Sigma\longmapsto \int_\Sigma f(\lambda)\,\mu(d\lambda),\quad f\in L^1_\mu
\]
belonging to $L^1_\mu(\mathbb{R},\mathcal{L}(H))$, we same as above obtain the  inequality
\begin{align*}
\tilde{f}^*(t)&\le \frac{2}{\pi t}\Big\|\int f(\lambda)\,\mu(d\lambda)\Big\|_{\mathcal{L}(H)}.
\end{align*}
\end{example}

\begin{example}[Samples of numerical calculations]\label{4.4}
The  inequality \eqref{jackson} together with the exact value of the normalized constant ${c}_{s,\tau}$
are suitable for direct numerical calculations. Illustrate this on several graphs,
taking as an example the inverse Gaussian distribution function.

Consider in $L_\mu^\infty$ with the Gaussian measure $\mu(d\tau)=(2\pi)^{-1/2}e^{-\tau^2/2}$ $(\tau\in\mathbb{R})$
 two-parameter inverse Gaussian distribution with support on $(0,\infty)$ that is given by
\[
f(t)=C\sqrt{\frac{l}{2\pi t^3}}\exp\left(-\frac{l(t-m)^2}{2m^2t}\right),\quad C,t>0
\]
with the mean $m>0$ and the shape parameter $l>0$. By Theorem~\ref{main}
\begin{align*}
f^*(u)=E(u,f)&=\inf\left\{\|f-f_\sigma\|_\infty\colon m(\sigma,f_\sigma)\le u\right\},\\
m(\sigma,f_\sigma)&=\mu\left\{u\in(0,\infty)\colon {\vert f_\sigma(u)\vert>\sigma}\right\}
\end{align*}
coincides with the non-increasing rearrangement of $f$ on $(0,\infty)$. As a result, we obtain
\begin{align*}
f^*(u)=E(u,f)&\le{c}_{s,\tau}
u^{-s}\,\|f\|_{\tau/s}\quad\text{for
all}\quad {f\in L_\mu^{\tau/s}},\end{align*}
where $L_\mu^{\tau/s}=\mathcal{B}_\tau^s\big(L^0_\mu,L^\infty_\mu\big)$
in accordance with Remark~\ref{r4} and the equality \eqref{besov}.

  The obtained results of numerical calculations are illustrated on Fig.~\!2,3.

\begin{figure}
\begin{center}
\includegraphics[height=4cm,width=12cm]{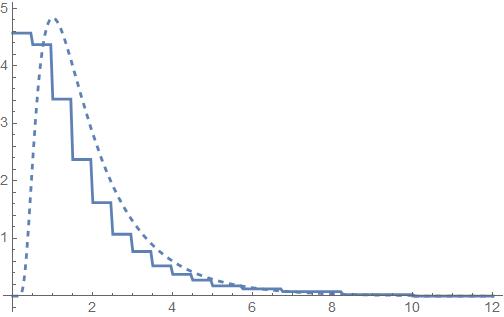}
\caption{Graphs of the function $f(t)=10\sqrt{\frac{2}{\pi t^3}}e^{-\frac{(t-2)^2}{2t}}$ (dots) and its non-increasing rearrangement $f^*(t)$
(polygon),  (${0<t<10}$)}
\end{center}
\end{figure}


\begin{figure}
\begin{center}
\includegraphics[height=4.5cm,width=12cm]{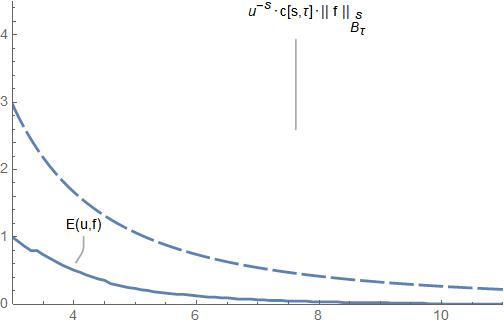}
\caption{Graphs of approximation $E(u,f)$ via $u^{-s}c_{s,\tau}\|f\|_{\mathcal{B}_{\tau}^{s}}$
(${0<u<11}$, ${s=\tau=2}$)  using Jackson's inequality}
\end{center}
\end{figure}
\end{example}





\section*{Statements and Declarations: \normalsize \rm
There are no conflicts and potential competing of interest to disclose.}


\begin{thebibliography}{99}
\bibitem{bergh76}
Bergh, J.,  L\"ofstr\"om, J.: Interpolation Spaces. Springer, Berlin-G\"{o}ttingen-Heidelbergm  (1976)

\bibitem{BerndC85}
{Bernd, C.:}  Inequalities of Bernstein-Jackson-type and the degree
of compactness of operators in Banach spaces. {Ann. Inst. Fourier (Grenoble)}.
{35}(3) 79--118 (1985)  

\bibitem{Burchard}
Burchard, A.,  Hajaiejb, H.: Rearrangement inequalities for functionals with monotone integrands.
J. Funct. Anal. {233}. 561--582 (2006).

\bibitem{Grafakos2014}  Grafakos, L.: Classical Fourier analysis, vol. 249/250 Springer (2014)

\bibitem{DeVore1988}
DeVore, R.A.,  Popov, V.: Interpolation of Besov spaces, Trans. Amer.
 Math. Soc. 305, 397--414 (1988)

\bibitem{DeVore1998}
DeVore, R.A.: Nonlinear approximation. Acta Numer. 7(1), 51--150 (1998)

\bibitem{DL19}
Dmytryshyn, M., Lopushansky, O.: On spectral approximations of
unbounded operators. {Complex Anal. Oper. Theory}. {13}(8),  3659--3673 (2019)

\bibitem{Feichtinger2016}
Feichtinger, H. G., Fuhr, H., Pesenson, I. Z.:  Geometric space-frequency analysis on
manifolds, J. Fourier Anal. Appl., 22, 1294–1355 (2016)

\bibitem{Komatsu1981}
Komatsu, N.: A general interpolation theorem of Marcinkiewicz type.
{T\^{o}hoku Math. Journ.,} {\bf33}, 383--393  (1981)

\bibitem{Hall2013}
Hall, B. C.: Quantum Theory for Mathematicians. Springer  (2013)

\bibitem{Lopushansky2023}
Lopushansky, O.: Bernstein–Jackson Inequalities on Gaussian Hilbert Spaces.
J. Fourier Anal. Appl., {29}, 57--78 (2023)

\bibitem{Maligranda1991}
Maligranda, L.,  Persson, L.E.: The E-functional for some pairs groups.
Results Math. {20}(1/2), , 538-553 (1991)

\bibitem{McLean2000}
McLean, W.:  {Strongly Elliptic Systems and Boundary Integral Equations}.  Cambridge Univ. Press (2000)

\bibitem{ONeilWeiss1963}
O'Neil, R.,  Weiss, O.: The Hilbert transform and rearraangement of functions, Stud. Math., {23}, 190--197 (1963)

\bibitem{Nikolski75}
Nikolskii, S.: Approximation of functions of several variables and imbedding
  theorems.  Springer, Berlin-G\"{o}ttingen-Heidelberg (1975)

\bibitem{PeetreSparr1972}
{Peetre, J.,} {Sparr, G.:} Interpolation of normed Abelian groups.
{Ann. Mat. Pura  Appl.} {92}(1), 217--262 (1972)

\bibitem{Pesenson2024}
Pesenson I.Z.: {Analysis in Function Spaces Associated with the Group  $ax+b$}.
Results Math. 79, 214 (2024)

\bibitem{pietsch1981}
Pietsch, A.: Approximation spaces.
J. Approx. Theory {32}(2), 115--134 (1981)

\bibitem{Prestin}
Prestin, J., Savchuk, V.V., Shidlich, A.L.:  Direct and inverse
theorems on the approximation of 2- periodic functions by Taylor
Abel Poisson operators, Ukr. Math. J. 69(5), 766--781 (2017)

\bibitem{Triebel78}
{Triebel, H.:}  {Interpolation Theory. Function Spaces. Differential
Operators}. North-Holland Publ. (1978)
\end{thebibliography}
\end{document}